\newtheorem{prop}{Proposition}
\newtheorem{lemma}{Lemma}
\newcommand{\Tr}{\textrm{Tr}}
\thanks{Inria Sophia Antipolis, France, Mahmoud.El\_Chamie@inria.fr}\thanks{University of Nice Sophia Antipolis, France}%
\thanks{Inria Sophia Antipolis, France, Giovanni.Neglia@inria.fr}%
\thanks{Inria Sophia Antipolis, France, K.Avrachenkov@sophia.inria.fr} }
\begin{document}
\makeRR   % cas d'un rapport de recherche
%% \makeRT % cas d'un rapport technique.
%% a partir d'ici, chacun fait comme il le souhaite
\tableofcontents
\newpage

\section{Introduction}\label{sec:intro}
A network is formed of nodes (or agents) and communication links that allow these nodes to share information and resources. Algorithms for efficient routing and efficient use of resources are proposed to save energy and speed up the processing. For small networks, it is possible for a central unit to be aware of all the components of the network and decide how to optimally use a resource on a global view basis. As networks expand, the central unit needs to handle a larger amount of data, and centralized optimization may become unfeasible especially when the network is dynamic. In fact, the optimal configuration needs to be computed whenever a link fails or there is any change in the network. Moreover, nodes may have some processing capabilities that are not used in the centralized optimization. With these points in mind, it becomes more convenient to perform distributed optimization relying on local computation at each node and local information exchange between neighbors. Such distributed approach is intrinsically able to adapt to local network changes. 
A significant amount of research on distributed optimization in networks has recently been carried out. New faster techniques  (\cite{Wei11a,Wei11b,Ghadimi11}) have been proposed for the traditional dual decomposition approach for separable problems that is well known in the network community since Kelly's seminal work on TCP (\cite{Kelly98}). A completely different approach has been recently proposed in~\cite{Nedic09}: it combines a consensus protocol, that is used to distribute the computations among the agents, and a subgradient method for the minimization of a local objective. Convergence results hold in the presence of constraints (\cite{Nedic10}), errors due to quantization (\cite{Nedic09b}) or to some stochastic noise (\cite{Ram10}) and in dynamic settings (\cite{Nedic09a,Lobel11,Masiero11}). 
Finally a third approach relies on some intelligent random exploration of the possible solution space, e.g.~using genetic algorithms (\cite{Alouf10}) or the annealed Gibbs sampler (\cite{Kauffmann07}).

In this report we study distributed techniques to optimally select the weights of average consensus protocols (also referred to as ave-consensus protocols or algorithms). These protocols allow nodes in a network, each having a certain measurement or state value, to calculate the average across all the values in the network by communicating only with their neighbors. Consensus algorithms are used in various applications such as environmental monitoring of wireless sensor networks and cooperative control of a team of machines working together to accomplish some predefined goal. For example, a group of vehicles moving in formation to the same target must reach consensus on the speed and direction of their motion to prevent collisions. Although the average seems to be a simple function to compute, it is an essential element for performing much more complex tasks including optimization, source localization, compression, and subspace tracking (\cite{Rabideau:1996, Ren:2007, Boyd06}). 

In the ave-consensus protocol, each node first selects weights for each of its neighbors,  then at each iteration the estimates are exchanged between neighbors and each node updates its own estimate by performing a weighted sum of the values received (\cite{Saber07, Ren:2005}). Under quite general conditions the estimates asymptotically converge to the average across all the original values. 
The weights play an essential role to determine the speed of convergence of the ave-consensus. 
For this reason in this report we study how to select the weights in a given network in order to have fast convergence independently from initial nodes' estimates.
In \cite{Xiao04}, the authors refer to this problem as the Fastest Distributed Linear Averaging (FDLA) weight selection problem. They show FDLA problem is equivalent to maximize the spectral gap of the weight matrix $W$ and that, with the additional requirement of $W$ being symmetric, is a convex \emph{non-smooth}  optimization problem. Then, the (symmetric) FDLA problem  can be solved offline by a centralized node using interior point methods, but, as we discussed above, this approach may not be convenient for large scale networks and/or when the topology changes over time. 

In this work, we propose to select the consensus weights as the values that minimize the Schatten $p$-norm of the weight matrix $W$ under some constraints (due also to the network topology). The Schatten $p$-norm of a matrix is the $p$-norm of its singular values as we will see later. We show that this new optimization problem can be considered an approximation of the original problem in \cite{Xiao04} (the FDLA) and we reformulate our problem as an equivalent unconstrained, convex and \emph{smooth} minimization that can be solved by the gradient method. More importantly, we show that in this case the gradient method can be efficiently distributed among the nodes.  
We describe a distributed gradient procedure to minimize the Schatten $p$-norm for an even integer $p$ that requires each node to recover information from nodes that are up to $\frac{p}{2}$-hop distant. Then the order $p$ of the Schatten norm is a tuning parameter that allows us to trade off the quality of the solution for the amount of communication/computation needed. In fact the larger $p$ the more precise the approximation, but also the larger the amount of information nodes need to exchange and process. The simulations are done on real networks (such as Enron's internal email graph and the dolphins social network) and on random networks (such as Erdos Renyi and Random Geometric Graphs). Our simulation results show that our algorithm provides very good performance in comparison to other distributed weighted selection algorithms already for $p=2$, i.e. when each node needs to collect information only from its direct neighbors.
Finally, we show  that nodes do not need to run our weight optimization algorithm \emph{before} being able to start the consensus protocol to calculate the average value, but the two can run in parallel.

The report is organized as follows:
In Section~\ref{PF} we formulate the problem we are considering and we give the notation used across the report.
Section~\ref{relatedwork} presents the related work for the weight selection problem for average consensus.
In Section \ref{TM} we propose Schatten $p$-norm minimization as an approximation of the original problem and in section~\ref{COTM} we show how its solution can be computed in a distributed way and evaluate its computation and communication costs.
 Section \ref{PE} compares the performance of our algorithm and that of other known weight selection algorithms on different graph topologies (real and random graphs). We also investigate the case when the weight optimization algorithm and the consensus protocol runs simultaneously, and then the weight matrix changes at every time slot. Section~\ref{EXT} discusses some methods to deal with potential instability problems and with misbehaving nodes.
Section \ref{Conc} summarizes the report.

\section{Problem Formulation} \label{PF}

Consider a network of $n$ nodes that can exchange messages between each other through communication links. Every node in this network has a certain value (e.g.~a measurement of temperature in a sensor network or a target speed in a unmanned vehicle team), and each of them calculate the average of these values through distributed linear iterations. The network of nodes can be modeled as a graph $G=(V,E)$ where $V$ is the set of vertices, labeled from $1$ to $n$, and $E$ is the set of edges, then $(i,j)\in E$ if nodes $i$ and $j$ are connected and can communicate (they are neighbors) and $|E|=m$. We label the edges from $1$ to $m$. If link $(i,j)$ has label $l$, we write $l \sim (i,j)$.  Let also $N_i$ be the neighborhood set of node $i$. All graphs in this report are considered to be \emph{connected} and \emph{undirected}. 
Let $x_i(0)\in \mathbb{R}$ be the initial value at node $i$. We are interested in computing the average 
$$x_{ave}=(1/n)\sum _{i=1}^nx_i(0),$$ 
in a decentralized manner with nodes only communicating with their neighbors. 
The network is supposed to operate synchronously: when a global clock ticks, all nodes in the system perform the iteration of the averaging protocol. At iteration $k+1$, node $i$ updates its state value $x_i$ as follows:
\begin{equation}\label{wsum}
x_i(k+1)= w_{ii}x_i(k)+\sum _{j\in N_i}w_{ij}x_j(k),
\end{equation}
where $w_{ij}$ is the weight selected by node $i$ for the value sent by its neighbor $j$ and $w_{ii}$ is the weight selected by node $i$ for it own value. 
As it is commonly assumed, in this report we consider that two neighbors select the same weight for each other, i.e.~$w_{ij}=w_{ji}$. 
%A good summary of how to select these weights can be found in \cite{me11}.
%Eventually, the system will evolve following the matrix form equation:
The matrix form equation is:
\begin{equation}\label{Mwsum}
\mathbf{x}(k+1)=W\mathbf{x}(k),
\end{equation}
%or
%\[ \mathbf{x}(k)=W^k\mathbf{x}(0), \]
where $\mathbf{x}(k)$ is the state vector of the system and $W$ is the weight matrix. The main problem we are considering in this report is how a node $i$ can choose the weights $w_{ij}$ for its neighbors so that the state vector $\mathbf{x}$ of the system converges fast to consensus. There are centralized and distributed algorithms for the selection of $W$, but in order to explain them, we need first to provide some more notation. 
We denote  by  $\mathbf{w}$ the vector of dimensions $m\times 1$, whose $l$-th element $w_l$ is the weight associated to link $l$, then if $l\sim(i,j)$ it holds $w_l=w_{ij}=w_{ji}$. $A$ is the adjacency matrix of graph $G$, i.e. $a_{ij}=1$ if $(i,j) \in E$ and $a_{ij}=0$ otherwise. $\mathcal{C}_G$ is the set of all real $n\times n$ matrices $M$ following graph $G$, i.e. $m_{ij}=0$ if $(i,j) \notin E$. $D$ is a diagonal matrix where $d_{ii}$ (or simply $d_i$) is the degree of node $i$ in the graph $G$. $\mathcal{I}$ is the $n\times m$ incidence matrix of the graph, such that for each $l\sim(i,j)\in E\;$ $\mathcal{I}_{il}=+1$ and $\mathcal{I}_{jl}=-1$ and the rest of the elements of the matrix are null. $L$ is the laplacian matrix of the graph, so $L=D-A$. It can also be seen that $L=\mathcal{I}\mathcal{I}^T$. 
The $n\times n$ identity matrix is denoted by $I$. 
Given that $W$ is real and symmetric, it has real eigenvalues (and then they can be ordered). We denote by $\lambda _i$ the $i$-th largest eigenvalue of $W$, and by $\mu$ the largest eigenvalue in module non considering $\lambda_1$, i.e.~$\mu=\max\{\lambda_2,-\lambda_n\}$. $\sigma _i$ is the $i$-th largest singular value of a matrix. $\Tr(X)$ is the trace of the matrix $X$ and $\rho(X)$ is its spectral radius.  $||X||_{\sigma p}$ denotes the Schatten $p$-norm of matrix $X$, i.e. $||X||_{\sigma p}=(\sum _i\sigma _i^p)^{1/p}$. Finally we use the symbol $\frac{\text{d}\hfill }{\text{d}X}f(X)$, where $f$ is a differentiable scalar-valued function $f(X)$ with matrix argument $X\in \mathbb{R}^{m\times n}$, to denote the $n\times m$ matrix whose $(i,j)$ entry is $\frac{\partial f(X)}{\partial x_{ji}}$. 
Table \ref{tab:notation} summarizes the notation used in this report.

%\begin{comment}
\begin{table}[h]
\caption{Notion } %title of the table
\centering
% centering table
\scalebox{1}{
\begin{tabular}{lll}
% creating eight columns
\hline\hline
%inserting double-line
Symbol & Description & Dimension\\ 
\hline
$G$ & network of nodes and links & -\\
$V$ & set of nodes/vertices & $|V|=n$\\
$E$ & set of links/edges & $|E|=m$\\
$\mathbf{x}(k)$ & state vector of the system at iteration $k$ & $n\times 1$\\
$W$ & weight matrix & $n\times n$\\
%$\mathbf{W}_i$ & vector of weights selected by node $i$ to its &~\\
%~& neighbors & $d_i\times 1$\\
$\mathbf{W}_i$ & vector of weights selected by node $i$ to its neighbors & $d_i\times 1$\\
$\mathbf{w}$ & vector of weights on links & $m\times 1$\\
%$\text{diag}(\mathbf{v})$ & diagonal matrix having the elements of the&~\\
%~& $n\times 1$ vector $\mathbf{v}$ & $n\times n$ \\
$\text{diag}(\mathbf{v})$ & diagonal matrix having the elements of the $n\times 1$ vector $\mathbf{v}$ & $n\times n$ \\
$\mathcal{C}_G$ & set of $n\times n$ real matrices following $G$& - \\
$D$ & degree diagonal matrix & $n\times n$\\
$A$ & adjacency matrix of a graph & $n\times n$\\
$\mathcal{I}$ & incidence matrix of a graph & $n\times m$\\
$L$ & laplacian matrix $L=D-A=\mathcal{I}\mathcal{I}^T$ & $n\times n$\\
$\lambda _i$ & $i$th largest eigenvalue of $W$ & scalar\\
$\Lambda$ & eigenvalue diagonal matrix $\Lambda _{ii}=\lambda _i$ & $n\times n$\\ 
$\sigma _i$ & $i$th largest singular value & scalar\\
$\mu$ & second largest eigenvalue in magnitude of $W$ & scalar\\
$\rho (X)$ & spectral radius of matrix $X$ & scalar\\
$\Tr(X)$ & trace of the matrix $X$ & scalar\\
$||X||_{\sigma p}$ & Schatten $p$-norm of a matrix $X$ & scalar\\
$\frac{\text{d}\hfill}{\text{d}X}f(X)$ & Derivative of $f(X)$, $X\in \mathbb{R}^{m\times n}$, $f(X)\in \mathbb{R}$ & $n\times m$\\
$P_S(.)$ & Projection on a set $S\subset\mathbb{R}^m$ & $m\times 1$\\
\hline
% inserts single-line
\end{tabular}}
\label{tab:notation}
\end{table}
%\end{comment}

\subsection{Convergence Conditions}
In \cite{Xiao04} the following set of conditions is proven to be necessary and sufficient to guarantee convergence to consensus for any initial condition:
\begin{eqnarray}
& & \mathbf{1}^T W = \mathbf{1}^T, \label{e:stoch}\\
& & W \mathbf{1}=\mathbf{1},\label{e:stoch2}\\
& & \rho(W - \frac{1}{n}\mathbf{1}\mathbf{1}^T)<1, \label{e:spectral}
\end{eqnarray}
where $\mathbf{1}$ is the vector of all ones. We observe that the weights are not required to be non-negative.
Since we consider $W$ to be symmetric in this report, then the first two conditions are equivalent to each other and equivalent to the possibility to write the weight matrix as follows: $W=I-\mathcal{I}\times \text{diag}(\mathbf{w})\times \mathcal{I}^T$, where $I$ is the identity matrix and $\mathbf{w}\in \mathbb{R}^m$ is the vector of all the weights on links  $w_l$, $l=1...m$.

\subsection{Fastest Consensus}
\label{s:fastest}
The speed of convergence of the system given in \eqref{Mwsum} is governed by how fast $W^k$ converges. Since $W$ is real symmetric, it has real eigenvalues and it is diagonalizable. We can write $W^k$ as follows (\cite{Meyer:2000}):
\begin{equation}\label{Wprojdec}
W^k=\sum _i\lambda _i^k G_i,
\end{equation}
where the matrices $G_i$'s have the following properties: $G_i$ is the projector onto the null-space of $W-\lambda_i I$ along the range of $W-\lambda_i I$, $\sum_i G_i=I$ and $G_i G_j=0^{n\times n} \ \ \forall i\neq j$. 
Conditions~(\ref{e:stoch}) and~(\ref{e:spectral}) imply that $1$ is the largest eigenvalue of $W$ in module and is simple. Then $\lambda_1=1$,  $G_1=1/n \mathbf{1}\mathbf{1}^T$ and $|\lambda_i|<1$ for $i>1$. 
%while all other eigenvalues are less than one in magnitude ($\sigma _i(W)<1 \text{ if } i>1$). 
From the above representation of $W^k$, we can deduce two important facts:
\begin{enumerate}
\item First we can check that $W^k$ actually converges, in fact we have $\lim _{k\rightarrow \infty}\mathbf{x}(k)=$ $\lim_{k\rightarrow \infty} W^k \mathbf{x}(0)=\frac{1}{n}\mathbf{1}\mathbf{1}^T x(0)= x_{ave}\mathbf{1}$ as expected.
\item Second, the speed of convergence of $W^k$ is governed by the second largest eigenvalue in module, i.e.~on $\mu = \max \{\lambda _2,-\lambda _n\}=\rho\left(W-G_1 \right)$. For obtaining the fastest convergence, nodes have to select weights that minimizes $\mu$, or equivalently maximize the spectral gap\footnote{
	The spectral gap is the difference between the largest eigenvalue in module and the second largest one in module. In this case it is equal to $1-\mu$.
} of $W$.
\end{enumerate}
Then the problem of finding the weight matrix that guarantees the fastest convergence can be formalized as follows:

\begin{equation}\label{minim}
\begin{aligned}
& \underset{W}{\text{Argmin}}
& & \mu (W)\\
& \text{subject to}
& & W=W^T, \\
&&& W\mathbf{1}=\mathbf{1},\\
&&& W\in \mathcal{C} _G,
\end{aligned}
\end{equation}
where the last constraint on the matrix $W$ derives from the assumption that nodes can only communicate with their neighbors and then necessarily $w_{ij}=0$ if $(i,j) \not \in E$.
Problem~\ref{minim} is called in~\cite{Xiao04} the ``symmetric FDLA problem".

The above minimization problem is a convex one and the function $\mu (W)$ is non-smooth convex function. It is convex since when $W$ is a symmetric matrix, we have $\mu (W)=\rho(W-G_1)=||W-G_1||_2$ which is a composition between an affine function and the matrix L-2 norm, and all matrix norms are convex functions. The function $\mu (W)=\rho(W-G_1)$ is non-smooth since the spectral radius of a matrix is not differentiable at points where the eigenvalues coalesce \cite{Fan:1995}. The process of minimization itself in \eqref{minim} tends to make them coalesce at the solution. Therefore, smooth optimization methods cannot be applied to \eqref{minim}. Moreover, the weight matrix solution of the optimization problem is not unique. For example it can be checked that for the network in Fig.~\ref{conject}, there are infinite weight values that can be assigned to the link $(2,3)$ and solve the optimization problem \eqref{minim}, including $w_{23}=0$. Additionally, this shows that adding an extra link in a graph (e.g.~link $(2,3)$ in the Fig.~\ref{conject}), does not necessarily reduce the second largest eigenvalue of the optimal weight matrix.

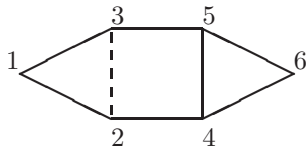
\begin{figure}[h]
\begin{center}

\setlength{\unitlength}{0.6cm} 
\begin{picture}(6,5)
\thicklines 
\put(1,2){\line(2,1){2}}
\put(1,2){\line(2,-1){2}}
\put(0.7,2.1){$1$}

\multiput(3,1.1)(0,0.4){5}
{\line(0,1){0.2}}

%\put(3,1){\line(0,1){2}}
\put(3,1){\line(1,0){2}}
\put(3,0.4){$2$}

\put(3,3){\line(1,0){2}} 
\put(3,3.1){$3$}

\put(5,1){\line(0,1){2}}
\put(5,0.4){$4$}

\put(5,3.1){$5$}

\put(7,2){\line(-2,1){2}}
\put(7,2){\line(-2,-1){2}}
\put(7,2.1){$6$}
  \end{picture}
  
  \caption{Network of 6 nodes.}
\label{conject}
\end{center}
\end{figure}

We address in this report a novel approach for the weight selection problem in the average consensus protocol by allowing nodes to optimize a global objective in a totally distributed way. The problem \eqref{minim} is in practice difficult to implement in a distributed way because of the non-smoothness of the function $\mu$. We present in this report a differentiable approximation of the problem, and we show how the new optimization problem can be implemented in a fully decentralized manner using gradient techniques.  We then compare the approximated solution with the optimal one and other distributed weight selection algorithms such as the metropolis or the max degree ones. 

\section{Related Work} \label{relatedwork}
Xiao and Boyd in \cite{Xiao04} have shown that the symmetric FDLA problem \eqref{minim} can be formulated as a Semi-Definite Program (SDP) that can be solved by a centralized unit using interior point methods.
The limit of such centralized approach to weight selection is shown by the fact that a popular solver as \texttt{CVX}, matlab software for disciplined convex programming~\cite{cvx:2011}, can only find the solution of~\eqref{minim} for networks with at most tens of thousands of links. The optimal solution in larger networks can be found iteratively using a centralized subgradient method. A possible approach to distribute the subgradient method and let each node compute its own weights is also proposed in~\cite{Xiao04}, but it requires at each time slot an iterative sub-procedure to calculate an approximation of some eigenvalues and eigenvectors of the matrix $W$ (global information not local to nodes in a network).   

Kim \emph{et al.~} in \cite{Kim:2009} approximate the general FDLA  using the $q$th-order spectral norm (2-norm) minimization ($q$-SNM). They showed that if a symmetric weight matrix is considered, then the solution of the  $q$-SNM is equivalent to that of the symmetric FDLA problem. Their algorithm's complexity is even more expensive than the SDP. Therefore, solving the problem \eqref{minim} in a distributed way is still an open problem.

Some heuristics for the weight selection problem that guarantee convergence of the average protocol and attract some interest in the literature either due to their distributed nature or to the easy implementation are the following (see \cite{Xiao05distributedaverage, Xiao04}): 
\begin{itemize}
\item max degree weights (MD): \\$w_l=\frac{1}{\Delta +1} \ \ \forall l=1...m$.
\item local degree (metropolis) weights (LD): \\$w_l = \frac{1}{\text{max}\{d_i,d_j\}+1} \ l\sim (i,j) \ \ \forall l=1,2,\dots m$. 
\item optimal constant weights (OC): \\$w_l=\frac{2}{\lambda _1(L)+\lambda _{n-1}(L)} \ \ \forall l=1...m.$
\end{itemize}
where $\Delta=\max _i\{d_i\}$ is the maximum degree in the network and $L$ is the Laplacian of the graph. The weight matrix can be then deduced from $\mathbf{w}$: \[W=I-\mathcal{I}\times \text{diag}(\mathbf{w})\times \mathcal{I}^T.\]

 \section{Schatten Norm Minimization}  \label{TM}

We change the original minimization problem in~\eqref{minim} by considering a different cost function that is a monotonic function of the Schatten Norm. 
The minimization problem we propose is the following one:
 %The weight selection for average consensus that approximate the optimal weights selected by \eqref{minim} can be choosen in a way to minimize a smooth convex function where gradients on links depend only on local weights.  The objective function of our proposed optimization can also reach good approximations of the optimal consensus with a trade off between communication/computation overhead and approximation quality. For nodes to solve the optimization in totally decentralized manner, the gradient must be calculated locally. We propose the following optimization as an approximation of the problem \eqref{minim}:

 \begin{equation}\label{tracemin}
\begin{aligned}
& \underset{W}{\text{Argmin}}
& & f(W)=||W||_{\sigma p}^p \\
& \text{subject to}
& & W=W^T, \\
&&& W\mathbf{1}=\mathbf{1},\\
&&& W\in \mathcal{C} _G,
\end{aligned}
\end{equation}
where $p$ is an even positive integer. The following result establishes that~\eqref{tracemin} is a smooth convex optimization problem and also it provides an alternative expression of the cost function in terms of the trace of $W^p$. For this reason we refer to our problem also as \emph{Trace Minimization} (TM).

\begin{prop}\label{prop1}
$f(W)=||W||_{\sigma p}^p=\Tr(W^p)$ is a scalar-valued smooth convex function on its feasible domain when $p$ is an even positive integer.
\end{prop}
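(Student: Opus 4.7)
The plan is to prove the three claims (equality with the trace, smoothness, convexity) in sequence, exploiting that on the feasible domain $W$ is symmetric, so its singular values coincide with the moduli of its eigenvalues.

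First I would establish the identity $\|W\|_{\sigma p}^p=\Tr(W^p)$. Since $W=W^T$, there is an orthogonal diagonalization $W=Q\Lambda Q^T$ with $\Lambda=\mathrm{diag}(\lambda_1,\dots,\lambda_n)$, and the singular values are $\sigma_i=|\lambda_i|$. For $p$ an even positive integer one has $|\lambda_i|^p=\lambda_i^p$, so
\begin{equation*}
\|W\|_{\sigma p}^p=\sum_{i=1}^n \sigma_i^p=\sum_{i=1}^n \lambda_i^p=\Tr(\Lambda^p)=\Tr(Q\Lambda^p Q^T)=\Tr(W^p),
\end{equation*}
using the cyclic invariance of the trace. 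This identity already shows $f$ is real-valued (the singular-value expression only guarantees nonnegativity).

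Next, smoothness is essentially immediate from the trace form: $\Tr(W^p)$ is a homogeneous polynomial of degree $p$ in the $n^2$ entries of $W$, hence a $C^\infty$ function on $\mathbb{R}^{n\times n}$, and its restriction to the affine feasible set (which is cut out by linear constraints $W=W^T$, $W\mathbf{1}=\mathbf{1}$, and $w_{ij}=0$ for $(i,j)\notin E$) is still $C^\infty$.

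For convexity I would give the short composition argument. For $p\ge 1$ the Schatten $p$-norm is a genuine matrix norm (Ky Fan/von Neumann), hence $W\mapsto\|W\|_{\sigma p}$ is a convex, nonnegative function on $\mathbb{R}^{n\times n}$. The scalar map $t\mapsto t^p$ is convex and nondecreasing on $[0,\infty)$ for $p\ge 1$, so the composition $W\mapsto \|W\|_{\sigma p}^p$ is convex on $\mathbb{R}^{n\times n}$, and a fortiori on the (convex) feasible set defined by the linear constraints. As a sanity check one can rederive this directly from the symmetric form: $f(W)=\sum_i \varphi(\lambda_i(W))$ with $\varphi(t)=t^p$ convex, and by the Peierls--Klein trace inequality such spectral sums are convex on the space of symmetric matrices.

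The main obstacle, if one wants a fully self-contained argument, is justifying the convexity of $\|\cdot\|_{\sigma p}$ without invoking it as a black box; the cleanest route is to cite (or briefly recall) that Schatten norms are unitarily invariant norms coming from symmetric gauge functions applied to the singular-value vector, and the triangle inequality for singular values follows from Ky Fan's dominance theorem. All remaining steps (the identity with the trace, smoothness of a polynomial, and the composition rule) are elementary once symmetry and the parity of $p$ are exploited.
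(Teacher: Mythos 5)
Your proposal is correct and follows essentially the same route as the paper: the identity $\|W\|_{\sigma p}^p=\Tr(W^p)$ via symmetry and evenness of $p$, and convexity as the composition of the nondecreasing convex map $t\mapsto t^p$ on $[0,\infty)$ with the convex Schatten norm. The only cosmetic difference is in the smoothness step, where you observe that $\Tr(W^p)$ is a polynomial in the entries, while the paper instead records the explicit derivative $\frac{\text{d}}{\text{d}W}\Tr(W^p)=pW^{p-1}$, which it needs later for the distributed gradient computation.
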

\begin{proof}
We have $\Tr(W^p)=\sum _{i=1}^n\lambda _i^p$. Since $W$ is symmetric, its non-zero singular values are the absolute values of its non-zero eigenvalues~(\cite{Meyer:2000}). Given that  $p$ is even, then $\sum_{i=1}^{n}\lambda _i^p=\sum_{i=1}^n \sigma _i^p$. Therefore, $\Tr(W^p)=||W||_{\sigma p}^p$. 

The Schatten norm $||W||_{\sigma p}$ is a nonnegative convex function, then $f$ is convex because it is the composition of a non-decreasing convex function---function $x^p$ where $x$ is non-negative---and a convex function (see \cite{BoydBook:2004}).  

The function is also differentiable and we have 
\begin{equation}\label{derivativef}
\frac{\text{d}\hfill}{\text{d}W}\Tr(W^p)=pW^{p-1},
\end{equation}
(see \cite[p.~411]{Bernstein:2009}).
\end{proof}

We now illustrate the relation between \eqref{tracemin} and the optimization \eqref{minim}. The following lemmas will prepare the result:

\begin{lemma}
\label{l:asympt}
For any symmetric weight matrix $W$ whose rows (and columns) sum to $1$ and with eigevalues $\lambda_1(W)\ge \lambda_2(W)\ge \dots \ge \lambda_n(W)$, there exist two integers $K_1\in \{1,2,\dots n-1\},K_2 \in \{0,1,2,\dots n-1\}$ and a positive constant $\alpha<1$ such that for any positive integers $p$ and $q$ where $p=2q$ we have:
\begin{equation}
\label{e:asympt}
1+ \tau(W)^p K_1\leq \Tr(W^p) \leq 1+\tau(W)^p (K_1+K_2\alpha^p),
\end{equation} 
where
\begin{equation}
\tau(W) =
\begin{cases}
\rho(W)=\max \{\lambda_1(W),-\lambda_n(W)\} & \text{if } \rho(W) > 1,
\\
\mu(W)=\max\{\lambda_2(W),-\lambda_n(W)\} & \text{if } \rho(W) \leq 1.
\end{cases}
\end{equation}
\end{lemma}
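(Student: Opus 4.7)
The plan is to reduce the trace to a sum of $p$-th powers of eigenvalue moduli and then group those eigenvalues by size. Since $W$ is symmetric with eigenvalues $\lambda_1\ge\cdots\ge\lambda_n$ and $p=2q$ is even, $\Tr(W^p)=\sum_{i=1}^{n}\lambda_i^p=\sum_{i=1}^{n}|\lambda_i|^p$ is a sum of non-negative terms. The constraint $W\mathbf{1}=\mathbf{1}$ guarantees that $1$ is an eigenvalue, so the first step is to peel off the corresponding summand $1^p=1$ and work with the remaining $n-1$ eigenvalues, which I will relabel $\tilde\lambda_1,\dots,\tilde\lambda_{n-1}$.

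The key bookkeeping step is to verify that $\tau=\max_j|\tilde\lambda_j|$ in both regimes of its definition. If $\rho(W)>1$ then $\tau=\rho(W)>1$, so no copy of the eigenvalue $1$ could have achieved the spectral radius, and peeling one off leaves the maximum modulus unchanged. If $\rho(W)\le 1$ then in fact $\rho(W)=1=\lambda_1$ (since $1$ is always an eigenvalue), and $\mu(W)=\max\{\lambda_2,-\lambda_n\}$ is by definition the largest modulus among the $\lambda_2,\ldots,\lambda_n$ that remain after peeling.

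Once $\tau$ has been identified with the largest modulus of the peeled spectrum, I will set
\[
K_1 = \#\{j : |\tilde\lambda_j|=\tau\}, \qquad K_2 = n-1-K_1, \qquad \alpha = \max\{|\tilde\lambda_j|/\tau : |\tilde\lambda_j|<\tau\},
\]
choosing $\alpha$ to be any value in $(0,1)$ in the degenerate case where the last set is empty (so $K_2=0$). Then $K_1\in\{1,\ldots,n-1\}$ because the maximum is attained, $K_2\in\{0,1,\ldots,n-1\}$ by construction, and $\alpha<1$ by definition. Splitting
\[
\Tr(W^p)=1+\sum_{|\tilde\lambda_j|=\tau}\tilde\lambda_j^p + \sum_{|\tilde\lambda_j|<\tau}\tilde\lambda_j^p = 1+K_1\tau^p + R_p,
\]
and bounding $0\le R_p \le K_2(\alpha\tau)^p = K_2\alpha^p\tau^p$---the lower bound because $p$ is even, the upper bound from the definition of $\alpha$---immediately yields the two-sided estimate~(\ref{e:asympt}).

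The main obstacle is combinatorial rather than analytic: one must carefully check that $\tau$ really equals the maximum modulus of the spectrum \emph{after} removing one copy of the eigenvalue $1$, for both branches of its definition, and that the declared ranges for $K_1,K_2$ together with the choice of $\alpha$ remain consistent in degenerate situations (e.g.\ $K_2=0$ or several eigenvalues happening to equal $1$). Once this bookkeeping is in place, the two inequalities drop out directly from the spectral decomposition.
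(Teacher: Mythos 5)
Your proof is correct and follows essentially the same strategy as the paper's: isolate the eigenvalue $1$, identify $\tau$ as the largest remaining modulus, and bound the sub-dominant eigenvalues by $K_2(\alpha\tau)^p$. Your version is in fact slightly cleaner, since working directly with $|\lambda_i|^p$ lets you avoid the paper's detour through the distinct eigenvalues of $W^2$ and its three-way case analysis.
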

\begin{proof}
%Due to space limits the proof is presented in \cite{El-Chamie:2012:8078}.
%\begin{comment}
Let us consider the matrix $W^2$ and denote by $\nu_1,\nu_2,\dots \nu_r$ its distinct eigenvalues ordered by the largest to the smallest and by $m_1,m_2, \dots m_r$ their respective multiplicities.  We observe that they are all non-negative and then they are also different in module. For convenience we consider $\nu_{s}=m_{s}=0$ for $s>r$.
We can then write:
$$\Tr(W^{p})=\sum_{i=1}^n \lambda_i^p=\sum_{i=1}^r m_i \nu_i^q.$$
The matrix $W^2$ has $1$ as an eigenvalue. Let us denote by $j$ its position in the ordered sequence of distinct eigenvalues, i.e.~$\nu_j=1$. Then it holds:
$$\Tr(W^p)=1+(m_j-1)+ \sum_{i \neq j} m_i \nu_i^q. $$ 

If $\rho(W)=1$ (i.e.~$1$ is the largest eigenvalue in module of $W$), then $1$ is also the largest eigenvalue of $W^2$ ($\nu_1=1$). If $m_1>1$, then it has to be either $\lambda_2(W)=1$ (the multiplicity of the eigenvalue $1$ for $W$ is larger than $1$) or $\lambda_n(W)=-1$. In both cases $\tau(W)=\mu(W)=1$, 
$$\Tr(W^p)=1+(m_1-1)+ \sum_{i > 1} m_i \nu_i^q $$ 
and the result holds with $K_1=m_1-1$, $K_2=\sum_{i>1} m_i$ and $\alpha=\sqrt{\nu_2}$. If $m_1=1$, then $\nu_2=\lambda_2^2$. We can write:
$$\Tr(W^p)=1+\nu_2^q \left(m_2+\sum_{i>2} m_i \left(\frac{\nu_i}{\nu_2}\right)^q \right)$$
and the result holds with $K_1=m_2$, $K_2=\sum_{i>2} m_i$, and $\alpha=\sqrt{\nu_3/\nu_2}$.

If $\rho(W) > 1$, then $\nu_1=\rho(W)^2>1$ and we can write:
%$$\Tr(W^p)=1+\nu_1^q \left(m_1+\sum_{i>1,i\neq j} m_i \left(\frac{\nu_i}{\nu_1}\right)^q + (m_j-1) \left(\frac{1}{\nu_1}\right)^q \right). $$ 
$$\Tr(W^p)=1+\nu_1^q \left(m_1+\sum_{\underset{i\neq j}{i>1}} m_i (\frac{\nu_i}{\nu_1})^q + (m_j-1) (\frac{1}{\nu_1})^q \right). $$
Then the result holds with $\tau(W)=\sqrt{\nu_1}=\rho(W)$, $K_1=m_1$, $K_2=\sum_{i>1} m_i$, and $\alpha= \sqrt{\nu_2/\nu_1}$. 
%\end{comment}
\end{proof}

\begin{lemma}
\label{l:tau}
Let us denote by $W_{(p)}$ the solution of the minimization problem \eqref{tracemin}. If the graph of the network is strongly connected then $\tau\left(W_{(p)}\right) < 1$ for $p$ sufficiently large.
\end{lemma}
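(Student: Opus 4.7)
The plan is to argue by contradiction, exploiting the lower bound in Lemma~\ref{l:asympt} together with the existence of a ``good'' feasible point.

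First I would exhibit a concrete feasible matrix with small $\tau$. Since the graph $G$ is connected, the max--degree (or metropolis) weights from Section~\ref{relatedwork} yield a symmetric, stochastic matrix $W^\star \in \mathcal{C}_G$ that is feasible for \eqref{tracemin} and satisfies the convergence conditions \eqref{e:stoch}--\eqref{e:spectral}; in particular $\rho(W^\star)=1$, so $\tau(W^\star)=\mu(W^\star)=:\mu^\star<1$. Applying the upper bound of Lemma~\ref{l:asympt} to $W^\star$ gives constants $K_1^\star,K_2^\star,\alpha^\star<1$ such that, for every even $p=2q$,
\begin{equation*}
\Tr\!\left((W^\star)^p\right)\ \le\ 1+(\mu^\star)^p\bigl(K_1^\star+K_2^\star (\alpha^\star)^p\bigr).
\end{equation*}
Because $\mu^\star<1$, the right-hand side tends to $1$ as $p\to\infty$; in particular it is strictly less than $2$ for all sufficiently large $p$.

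Next I would use optimality of $W_{(p)}$ together with the lower bound of Lemma~\ref{l:asympt}. Suppose, for contradiction, that $\tau\!\left(W_{(p)}\right)\ge 1$ for arbitrarily large $p$. Lemma~\ref{l:asympt} supplies an integer $K_1\ge 1$ (depending on $W_{(p)}$) with
\begin{equation*}
\Tr\!\left(W_{(p)}^{\,p}\right)\ \ge\ 1+\tau\!\left(W_{(p)}\right)^{p} K_1\ \ge\ 1+1\cdot 1\ =\ 2.
\end{equation*}
But since $W_{(p)}$ minimizes $\Tr(\,\cdot\,^p)$ over the feasible set and $W^\star$ is feasible, we have $\Tr\!\left(W_{(p)}^{\,p}\right)\le \Tr\!\left((W^\star)^p\right)<2$ for all $p$ large enough, a contradiction.

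The main subtlety is checking the two endpoints of the argument carefully: on one side, producing a feasible $W^\star$ with $\tau(W^\star)<1$ (this is where connectedness of $G$ enters, via a standard weight rule), and on the other side observing that the lower bound of Lemma~\ref{l:asympt} uses $K_1\ge 1$, which is what lets the assumption $\tau(W_{(p)})\ge 1$ force $\Tr(W_{(p)}^{\,p})\ge 2$ uniformly in $p$. Everything else is a routine comparison of the two bounds as $p\to\infty$.
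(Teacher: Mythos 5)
Your proof is correct and follows essentially the same route as the paper's: compare $W_{(p)}$ against the feasible local-degree/metropolis matrix, whose $p$-th power trace tends to $1$, and then use the lower bound of Lemma~\ref{l:asympt} (with $K_1\ge 1$) to rule out $\tau(W_{(p)})\ge 1$. The only difference is presentational: the paper derives $\tau(W_{(p)})^p<1$ directly from $\Tr(W_{(p)}^p)<2$, whereas you phrase the same comparison as a contradiction.
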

\begin{proof}
%Due to space limits the proof is presented in \cite{El-Chamie:2012:8078}.
%\begin{comment}
If the graph is strongly connected then there are multiple ways to assign the weights such that the convergence conditions~\eqref{e:stoch}-\eqref{e:spectral} are satisfied. 
In particular the local degree method described in~Sec.~\ref{relatedwork} is one of them. Let us denote by $W_{(LD)}$ its weight matrix. A consequence of the convergence conditions is that $1$ is a simple eigenvalue of $W_{(LD)}$, and that all other eigenvalues are strictly less than one
in magnitude (see~\cite{Xiao04}). It follows that $\tau\left(W_{(LD)}\right)$ in Lemma~\ref{l:asympt} is strictly smaller than one and that $\lim_{p \to \infty} \Tr\left(W_{(LD)}^p\right)=1$. Then there exists a value $p_0$ such that for each $p>p_0$
$$\Tr\left(W_{(LD)}^p\right)<2.$$
Let us consider the minimization problem~\eqref{tracemin} for a value $p>p_0$. $W_{(LD)}$ is a feasible solution for the problem, then
$$\Tr(W_{(p)}^p)\le \Tr(W_{(LD)}^p)<2.$$
Using this inequality and Lemma \ref{l:asympt}, we have:
$$1+\tau\left(W_{(p)}\right)^p \le 1+\tau\left(W_{(p)}\right)^p K_1\leq \Tr(W_{(p)}^p) < 2,$$
from which the thesis follows immediately.
%\end{comment}
\end{proof}

We are now ready to state our main results by the following two propositions:
\begin{prop}\label{problem_equivalence}
If the graph of the network is strongly connected, then the solution of the Schatten Norm minimization problem~\eqref{tracemin} satisfies the consensus protocol convergence conditions for $p$ sufficiently large. Moreover as $p$ approaches $\infty$, this minimization problem is equivalent to the minimization problem~\eqref{minim} (i.e.~to minimize the second largest eigenvalue $\mu(W)$).
\end{prop}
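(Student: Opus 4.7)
The plan is to leverage Lemmas \ref{l:asympt} and \ref{l:tau} as the main technical engines, and use the minimality of $W_{(p)}$ by plugging in a competitor from~\eqref{minim}.

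For the first claim, I would observe that the constraints of~\eqref{tracemin} already enforce \eqref{e:stoch} and \eqref{e:stoch2} directly (symmetry plus $W\mathbf{1}=\mathbf{1}$), so only the spectral condition \eqref{e:spectral} needs to be verified. Since $W_{(p)}$ is symmetric with $\mathbf{1}$ as an eigenvector for eigenvalue $1$, we have $\rho(W_{(p)}-\tfrac{1}{n}\mathbf{1}\mathbf{1}^T)=\mu(W_{(p)})$. By Lemma \ref{l:tau}, $\tau(W_{(p)})<1$ for $p$ large enough, which by definition of $\tau$ forces $\rho(W_{(p)})\le 1$ and hence $\tau(W_{(p)})=\mu(W_{(p)})<1$, giving \eqref{e:spectral}.

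For the equivalence as $p\to\infty$, let $W^{\star}$ be any solution of~\eqref{minim} with optimal value $\mu^{\star}=\mu(W^{\star})$. The convergence of the local-degree method (as in Lemma \ref{l:tau}) forces $\mu^{\star}<1$, so $\tau(W^{\star})=\mu^{\star}<1$ and the upper bound of Lemma \ref{l:asympt} applied to $W^{\star}$ yields
\begin{equation*}
\Tr\bigl((W^{\star})^p\bigr)\leq 1+(\mu^{\star})^p\bigl(K_1^{\star}+K_2^{\star}(\alpha^{\star})^p\bigr)\leq 1+n\,(\mu^{\star})^p,
\end{equation*}
where I have used $K_1^{\star}+K_2^{\star}\le n-1$ and $\alpha^{\star}<1$. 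Since $W^{\star}$ is feasible for~\eqref{tracemin} and $W_{(p)}$ minimizes $\Tr(W^p)$, we get $\Tr(W_{(p)}^p)\le 1+n(\mu^{\star})^p$.

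Combining this with the lower bound in Lemma \ref{l:asympt} applied to $W_{(p)}$ (using $K_1\ge 1$), I obtain
\begin{equation*}
1+\tau(W_{(p)})^p\;\leq\;\Tr(W_{(p)}^p)\;\leq\;1+n\,(\mu^{\star})^p,
\end{equation*}
hence $\tau(W_{(p)})\le n^{1/p}\mu^{\star}$. For $p$ large enough to make $\tau(W_{(p)})<1$ (Lemma \ref{l:tau}), we have $\tau(W_{(p)})=\mu(W_{(p)})$, and since $W_{(p)}$ is then feasible for~\eqref{minim} we also have $\mu(W_{(p)})\ge\mu^{\star}$. Sandwiching gives
\begin{equation*}
\mu^{\star}\;\leq\;\mu(W_{(p)})\;\leq\;n^{1/p}\,\mu^{\star}\;\xrightarrow[p\to\infty]{}\;\mu^{\star},
\end{equation*}
which is the claimed equivalence. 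The main obstacle I anticipate is mostly notational: making sure to separate the two regimes of $\tau$ (so that $\tau(W_{(p)})<1$ really does coincide with $\mu(W_{(p)})$ before invoking feasibility for~\eqref{minim}), and carefully bounding the multiplicity constants $K_1,K_2$ uniformly in $p$ by the crude bound $n$.
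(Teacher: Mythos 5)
Your proof is correct, and the first half (using Lemma~\ref{l:tau} to force $\rho(W_{(p)})=1$ and $\mu(W_{(p)})<1$) is exactly the paper's argument. The second half, however, takes a genuinely different route. The paper proves the ``equivalence'' by showing that the transformed objective $(\Tr(W^p)-1)^{1/p}$ converges uniformly (on the feasible set, once $\tau=\mu$) to $\mu(W)$, i.e.\ it compares the two \emph{cost functions} via the two-sided bound $\tau(W)K_1^{1/p}\le(\Tr(W^p)-1)^{1/p}\le\tau(W)K^{1/p}$ with $K=2(n-1)$. You instead compare the two \emph{optimal values} directly: plugging the FDLA optimizer $W^{\star}$ into the trace objective, invoking minimality of $W_{(p)}$, and sandwiching to get $\mu^{\star}\le\mu(W_{(p)})\le n^{1/p}\mu^{\star}$. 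This is essentially the content of the paper's \emph{next} proposition (the error bound $|\mu(W_{(SDP)})-\mu(W_{(p)})|\le\mu(W_{(SDP)})((n-1)^{1/p}-1)$), which the paper derives separately via the norm-equivalence inequality $\|\mathbf{v}\|_\infty\le\|\mathbf{v}\|_p\le m^{1/p}\|\mathbf{v}\|_\infty$ on the vector of non-unit eigenvalues; you obtain it from Lemma~\ref{l:asympt} alone. Your version is arguably the more meaningful reading of ``equivalent as $p\to\infty$'' and delivers a quantitative rate for free. One trivial caveat: Lemma~\ref{l:asympt} as stated only gives $K_1\le n-1$ and $K_2\le n-1$ separately, so without re-examining its proof the safe bound is $K_1^{\star}+K_2^{\star}(\alpha^{\star})^p\le 2(n-1)$ rather than $n-1$; this changes your constant from $n^{1/p}$ to $(2(n-1))^{1/p}$ and affects nothing in the limit.
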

\begin{proof}
%Due to space limits the proof is presented in \cite{El-Chamie:2012:8078}.
%\begin{comment}
The solution of problem~\eqref{tracemin}, $W_{(p)}$ is necessarily symmetric and its rows sum to $1$.
From Lemma~\ref{l:tau} it follows that for $p$ sufficiently large $\tau\left(W_{(p)}\right)<1$ then by the definition of $\tau(.)$ it has to be $\rho(W_{(p)})=1$ and $\mu(W_{(p)})<1$. Therefore $W_{(p)}$ satisfies all the three convergence conditions~\eqref{e:stoch}-\eqref{e:spectral} and then the consensus protocol converges.

Now we observe that with respect to the variable weight matrix $W$, minimizing $\Tr(W^p)$ is equivalent to minimizing $(\Tr(W^p)-1)^{1/p}$. From Eq.~\eqref{e:asympt}, it follows:
$$\tau(W)K_1^{\frac{1}{p}}\le (\Tr(W^p)-1)^{\frac{1}{p}}\le \tau(W)(K_1+K_2\alpha^p)^\frac{1}{p}.$$
%$K_1$, $K_2$, and $\alpha$ are independent of the value of $p$ and are just a function of the weight matrix $W$, 
$K_1$ is bounded between $1$ and $n-1$ and $K_2$ is bounded between $0$ and $n-1$, and $\alpha<1$,then it holds:
$$ \tau(W)K_1^{\frac{1}{p}}\le (\Tr(W^p)-1)^{\frac{1}{p}}\le \tau(W) K^\frac{1}{p},$$
with $K=2(n-1)$.
For $p$ large enough $\tau\left(W_{(p)}\right)=\mu(W_{(p)})$, then 
$$ \left|(\Tr(W_{(p)}^p)-1)^{\frac{1}{p}} - \mu(W_{(p)}) \right| \le \mu(W_{(p)}) \left( K^\frac{1}{p} -1\right)\le  K^\frac{1}{p} -1.$$
Then the difference of the two cost functions converges to zero as $p$ approaches infinity.
% the minimization problem~\eqref{tracemin} converges to the cost function of the minimization problem~\eqref{minim} when $p$ diverges.
%In this sense, Schatten Norm minimization~\eqref{tracemin} can be considered an approximation for the original problem~\eqref{minim}.
%\end{comment}
\end{proof}

\begin{prop}
The Schatten Norm minimization~\eqref{tracemin} is an approximation for the original problem~\eqref{minim} with a guaranteed error bound, 
$$|\mu (W_{(SDP)})-\mu (W_{(p)})|\leq \mu (W_{(SDP)})\times \epsilon (p),$$
where $\epsilon (p)=(n-1)^{1/p}-1$ and where $W_{(SDP)}$ and $W_{(p)}$ are the solutions of \eqref{minim} and \eqref{tracemin} respectively. 
\end{prop}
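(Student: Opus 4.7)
My plan is to sandwich $\mu(W_{(p)})$ between $\mu(W_{(SDP)})$ and $\mu(W_{(SDP)})(n-1)^{1/p}$ by playing the two optimality properties against each other. Since $W_{(SDP)}$ minimizes $\mu$ over the common feasible set of~\eqref{minim} and~\eqref{tracemin}, one has $\mu(W_{(SDP)}) \le \mu(W_{(p)})$; since $W_{(p)}$ minimizes $\Tr(W^p)$ over that same set, $\Tr(W_{(p)}^p) \le \Tr(W_{(SDP)}^p)$. Lemma~\ref{l:asympt} will be the tool that converts this trace inequality back into a statement about the second-largest eigenvalue in magnitude.

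First I would ensure that $p$ is large enough that both matrices sit in the regime $\tau=\mu$: for $W_{(SDP)}$ this is immediate, since the SDP returns a matrix satisfying the convergence conditions, hence $\rho(W_{(SDP)})=1$ and $\mu(W_{(SDP)})<1$; for $W_{(p)}$ it is exactly the content of Lemma~\ref{l:tau}. Inspecting the two sub-cases with $\rho(W)\le 1$ in the proof of Lemma~\ref{l:asympt}, one reads off the uniform bounds $K_1\ge 1$ and $K_1+K_2\le n-1$; combined with $\alpha<1$, inequality~\eqref{e:asympt} then specializes, for any symmetric $W$ with row sums equal to $1$ and $\rho(W)\le 1$, to
\[
\mu(W)^p \;\le\; \Tr(W^p)-1 \;\le\; (n-1)\,\mu(W)^p.
\]

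Next I would apply the left half of this bound to $W_{(p)}$, chain through the trace optimality $\Tr(W_{(p)}^p)\le \Tr(W_{(SDP)}^p)$, and then apply the right half to $W_{(SDP)}$, obtaining
\[
\mu(W_{(p)})^p \;\le\; \Tr(W_{(p)}^p)-1 \;\le\; \Tr(W_{(SDP)}^p)-1 \;\le\; (n-1)\,\mu(W_{(SDP)})^p.
\]
Taking $p$-th roots gives $\mu(W_{(p)})\le (n-1)^{1/p}\mu(W_{(SDP)})$, and combining with $\mu(W_{(SDP)})\le \mu(W_{(p)})$ yields
\[
0 \;\le\; \mu(W_{(p)})-\mu(W_{(SDP)}) \;\le\; \mu(W_{(SDP)})\bigl((n-1)^{1/p}-1\bigr),
\]
which is exactly the claimed bound with $\epsilon(p)=(n-1)^{1/p}-1$.

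The only real obstacle is the preparatory step: verifying uniformly that $\tau=\mu$ for both optimizers, and extracting the multiplicity bound $K_1+K_2\le n-1$ from the case-split proof of Lemma~\ref{l:asympt}. Once those points are settled, the remainder reduces to the short chain of inequalities above, driven purely by the two optimality conditions.
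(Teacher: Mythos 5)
Your proof is correct and is essentially the paper's argument: both sandwich $\mu(W_{(p)})$ using the two optimality relations $\mu(W_{(SDP)})\le\mu(W_{(p)})$ and $\Tr(W_{(p)}^p)\le\Tr(W_{(SDP)}^p)$ together with the two-sided bound $\mu(W)^p\le\Tr(W^p)-1\le(n-1)\,\mu(W)^p$. The only cosmetic difference is that the paper obtains this last bound directly from the equivalence $\lVert \mathbf{v}\rVert_\infty\le\lVert \mathbf{v}\rVert_p\le(n-1)^{1/p}\lVert \mathbf{v}\rVert_\infty$ applied to the vector of the $n-1$ eigenvalues different from $1$, whereas you extract it from the constants $K_1\ge 1$ and $K_1+K_2\le n-1$ in Lemma~\ref{l:asympt} (and your explicit check that both optimizers lie in the regime $\tau=\mu$ is, if anything, more careful than the paper's).
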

\begin{proof}
Let $S$ be the feasibility set of the problem \eqref{minim} (and \eqref{tracemin}), we have $\mu (W)=\max \{\lambda _2(W), -\lambda _n(W)\}$ and let $g(W)=\left(\sum _{i\geq 2}\lambda _i^p(W)\right)^{\frac{1}{p}}$ . Since $W_{(SDP)}$ is a solution of \eqref{minim}, then 
\begin{equation}\label{eq1}
\mu (W_{(SDP)})\leq \mu (W), \ \ \forall W\in S.
\end{equation}
 Note that the minimization of $g(W)$ is equivalent to the minimization of $Tr(W^p)$ when $W\in S$ (i.e. $\underset{W\in S}{\text{Argmin}} \ g(W)=\underset{W\in S}{\text{Argmin}} \ Tr(W^p)$), then 
 \begin{equation}\label{eq2}
 g(W_{(p)})\leq g(W), \ \ \forall W\in S.
 \end{equation}
  Finally for a vector $\mathbf{v} \in \mathbb{R}^m$ all norms are equivalent and in particular $||\mathbf{v}||_\infty \leq ||\mathbf{v}||_p\leq m^{1/p}||\mathbf{v}||_\infty$ for all $p\geq 1$. By applying this inequality to the vector whose elements are the $n-1$ eigenvalues different from $1$ of the matrix $W$, we can write 
 \begin{equation}\label{eq3}
  \mu (W)\leq g(W)\leq (n-1)^{1/p}\mu (W), \ \ \forall W\in S.
 \end{equation}
   Using these three inequalities we can derive the desired bound: 
% the commented equation is for double column format
\begin{align} 
\mu (W_{(SDP)})\overset{\eqref{eq1}}{\leq} \mu (W_{(p)}) \overset{\eqref{eq3}}{\leq} g(W_{(p)}) \overset{\eqref{eq2}}{\leq}  g(W_{(SDP)}) \nonumber\\
\leq (n-1)^{1/p}\mu (W_{(SDP)}),
\end{align}
where the number above the inequalities shows the equation used in deriving the bound.
%\begin{equation}
%\mu (W_{(SDP)})\leq \mu (W_{(p)}) \leq g(W_{(p)}) \leq  g(W_{(SDP)}) \leq %(n-1)^{1/p}\mu (W_{(SDP)}).
%\end{equation}
Therefore $\mu (W_{(SDP)})\leq \mu (W_{(p)}) \leq (n-1)^{1/p}\mu (W_{(SDP)})$ and the proposition directly follows. 
\end{proof}

{\bf Remark:} Comparing the results of Schatten Norm minimization \eqref{tracemin} with the original problem \eqref{minim}, we observe that on some graphs the solution of problem~\eqref{tracemin} already for $p=2$ gives the optimal solution of the main problem \eqref{minim}; this is for example the case  for complete graphs\footnote{
	This can be easily checked. 
	In fact, for any the matrix that guarantees the convergence of average consensus protocols it holds $\mu(W) \ge 0$ and $\Tr(W^2)\ge 1$ (because $1$ is an eigenvalue of $W$). 
	The matrix  $\hat W=1/n \mathbf{1} \mathbf{1}^T$ (corresponding to each link having the same weight $1/n$) has eigenvalues $1$ and $0$ with multiplicity $1$ and $n-1$ respectively. Then  $\mu(\hat W)=0$ and $\Tr(\hat W^2)=1$. It follows that $\hat W$ minimizes both the cost function of problem~\eqref{minim} and~\eqref{tracemin}.
}. However,  on some other graphs, it may give a weight matrix that does not guarantee the convergence of the consensus protocol because the second largest eigenvalue is larger than or equal to $1$ (the other convergence conditions are intrinsically satisfied). We have built a toy example, shown in Fig.~\ref{ToyN}, where this happens. The solution of \eqref{tracemin} assigns weight $0$ to the link $(i,j)$; $w_{ij}=0$ separates the network into two disconnected subgraphs, so $\mu (W)=1$ in this case. We know by Lemma~\ref{l:tau} that this problem cannot occur  for $p$ large enough. In particular for the toy example the matrix solution for $p=4$ already guarantees convergence. We discuss how to guarantee convergence for any value of $p$ in Section \ref{EXT}.

\begin{figure}
\begin{center}
\includegraphics[scale=0.35]{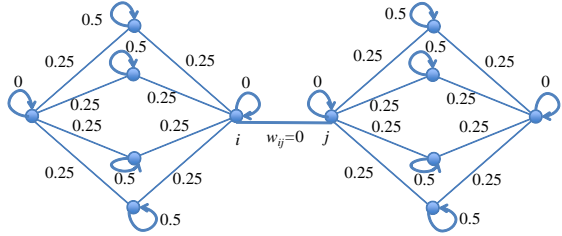}
\caption{For this network the matrix solution of Schatten Norm minimization~\eqref{tracemin} with $p=2$ does not guarantee convergence of average consensus to the true average because $w_{ij}=0$ which separates the network into two parts, each of which can converge to a totally different value (but not to the average of initial values).}
\label{ToyN}
\end{center}
\end{figure}

Given that problem~\eqref{tracemin} is smooth and convex, it can be solved by interior point methods which would be a centralized solution. In the next section we are going to show a distributed algorithm to solve problem~\eqref{tracemin}.

\section{A Distributed Algorithm for Schatten Norm minimization}\label{COTM}

%Since we have that minimization of $Tr(W^p)$ is equivalent to minimization of $||W-G_1||_{\sigma p}=(Tr(W^p)-1)^{1/p}$, and we have, \[ \lim _{p\rightarrow \infty}||W-G_1||_{\sigma p}= \mu (W), \] then the two problems \eqref{tracemin} and \eqref{minim} are equivalent for $p$ large enough. 

In this section we will show that the optimization problem~\eqref{tracemin} can be solved in a distributed way using gradient methods. 
%The variables used in this optimization are the $w_{ij}$ elements of the weight matrix $W$. 
By distributed algorithm we mean an algorithm where each node only needs to retrieve information from a limited neighborhood (possibly larger than $N_i$) in order to calculate the weights on its incident links.
%A distributed algorithm to solve this optimization means that each node $i$ can only choose the values for the variables accessible\footnote{The variables accessible by a node $i$ are the variables $w_{ij}$ of the weight matrix where $j\in N_i$.} by this node. 
%In what follows for the sake of simplicity we will sometimes describe nodes' operations in terms of the matrix $W$ or of its rows  $\mathbf{W}_i$ ($i=1,2,\dots n$).  For example one step of our procedure requires node $i$ to ``send the weight vector $\mathbf{W}_i$ to its neighbour." This may erroneously suggest that some global knowledge is required at each node. But due to the constraint $W\in \mathcal{C}_G$, any row $i$ of the matrix $W$ can only have values different from zero at column $j$ where $j\in N_i$. Then node $i$ only needs to transmit the weight of its incident links and its own weight $w_{ii}$ in the form an adiacency list and does not even need to know the total number of nodes in the network (i.e.~the length of the vector  $\mathbf{W}_i$). Below we discuss in detail more complex operations, but the reader has been warned.

%transmitting some local information, given that does not mean node $i$ is using global information because the only values different from zero in $\mathbf{W}_i$ are the local variables accessible by $i$. 
The constraint $W=W^T$ in the optimization requires any two neighbors $i$ and $j$ to choose the same weight on their common link $l\sim (i,j)$ i.e. $w_{ij}=w_{ji}=w_l$. The last condition  $W\mathbf{1}=\mathbf{1}$ means that at every node $i$ the sum of all weights on its incident links plus its self-weight $w_{ii}$ must be equal to one. This condition is satisfied if nodes choose first weights on links, and then adapt consequently their self-weights $w_{ii}$. Moreover these two constraints lead to the possibility to write $W$ as follows:  $W=I-\mathcal{I}\times \text{diag}(\mathbf{w})\times \mathcal{I}^T$, where $\mathbf{w}\in \mathbb{R}^m$ is the vector of all the weight links $w_l$, $l=1...m$. It follows that Schatten Norm minimization~\eqref{tracemin} is equivalent to the following unconstrained problem:
\begin{equation}\label{unconsTM}
\text{minimize } h(\mathbf{w})=Tr \left((I-\mathcal{I}\times \text {diag}(\mathbf{w})\times \mathcal{I}^T)^p\right).
\end{equation}
We will give a distributed algorithm to solve the Schatten Norm minimization~\eqref{tracemin} by applying gradient techniques to problem \eqref{unconsTM}. Since the cost function to optimize is smooth and convex as we proved in \mbox{Proposition \ref{prop1}}, if the gradient technique converges to a stationary point, then it converges to the global optimum. The gradient method uses the simple iteration:$$w_l^{(k+1)}=w_l^{(k)}-\gamma ^{(k)}g_l^{(k)} \ \forall l=1...m \ ,$$ where $\gamma ^{(k)}$ is the stepsize at iteration $k$ and $g_l^{(k)}$ is the \mbox{$l$-th} component of the gradient $\mathbf{g}^{(k)}$ of the function $h(\mathbf{w})$. At every iteration $k$, starting with a feasible solution for link weights, $w_l^{(k)}$, we calculate the gradient $g_l^{(k)}$ for every link, and then we obtain a new weight value $w_l^{(k+1)}$.

 There are different conditions on the function $h(.)$ and on the stepsize sequence that can guarantee convergence. A distributed computational model for optimizing a sum of non-smooth convex functions is proposed in \cite{Nedic:2009,Lobel11} and its convergence is proved for bounded (sub)gradients for different network dynamics. For a similar objective function, the authors in \cite{Johanson2008} study the convergence of a projected (sub)-gradient method with constant stepsize. For unbounded gradients, the algorithm in \cite[Section~5.3.2, p.~140]{Polyak87} guarantees  global convergence but requires a centralized calculation of the stepsize sequence. Because the objective function in \eqref{unconsTM} has unbounded gradient, our distributed implementation  combines ideas from unbounded gradients methods and the projecting methods using theorems from \cite{bertsekas03}. 
 %A distributed computation model for optimizing a sum of convex objective functions corresponding to multiple agents without constraints in the global optimization problem is studied in \cite{Nedic:2009}. Under some assumptions (graph connectivity, bounded intercommunication interval, symmetric weights, bounded subgradients and nonempty optimal set) the authors provide convergence results and convergence rate estimates for the subgradient method. In our case, the function $h(.)$ does not satisfy the ``bounded gradient'' assumption, so the results of the paper cannot be used here. For unbounded gradients, \cite{Polyak87} provides a global stepsize that guarantees global convergence, but the stepsize calculation requires some global knowledge (e.g.~the value of the function $h(.)$ or the module of its gradient) which is not relevant to the intended distributed setting. Because of that, 
 In particular, we will add a further constraint to \eqref{unconsTM}, looking for a solution in a compact set $X$, and we will consider the following projected gradient method:
$$\mathbf{w}^{(k+1)}= P_X\left(\mathbf{w}^{(k)} - \gamma^{(k)} \mathbf{g}^{(k)}\right),$$
where $P_X()$ is the projection on the set $X$. We can show that by a particular choice of $X$ and $\gamma ^{(k)}$ the method converges to the solution of the original problem. Moreover, all the calculations can be performed in a distributed way on the basis of local knowledge. In particular, we will show that:
\begin{itemize}
\item nodes incident to $l$ are able to calculate $g_l^{(k)}$ using only information they can retrieve from their (possibly extended) neighborhood;
\item the stepsize sequence $\gamma ^{(k)}$ is determined a priori and then nodes do not need  to evaluate the function $h$ or any other global quantity to calculate it; 
\item the projection on set $X$ can be performed component-wise, and locally at each node;
\item the global convergence of the projected gradient method is guaranteed.
\end{itemize} 
  We will start by  $g_l$ and show that it only depends on  information local to nodes $i$ and $j$ incident to the link  $l\sim (i,j)$, then we will discuss the choice of the stepsize $\gamma ^{(k)}$ and of the projection set $X$.

\subsection{Locally Computed Gradient}
Consider the link $l\sim (i,j)$, since $w_l=w_{ij}=w_{ji}$ and $w_{ii}=1-\sum_{s\in N_i}w_{is}$, we have:
\begin{equation}\label{diff}
\frac{\text{d}w_{st}}{\text{d}w_l}=
\begin{cases}
+1 &\text{if } s=i \text{ and } t=j\\
+1 &\text{if } s=j \text{ and } t=i\\
-1 &\text{if } s=i \text{ and } t=i\\
-1 &\text{if } s=j \text{ and } t=j\\
0 &\text{else.}
\end{cases}
\end{equation}
 The gradient $g_l$ of the function $h(\mathbf{w})$ for $l\sim (i,j)$ can be calculated as follows:
\begin{align}
g_l&=\frac{\text{d}\hfill h(\mathbf{w})}{\text{d}w_l}\nonumber \\
&=\frac{\text{d}\hfill f(W)|_{W=I-\mathcal{I}\times \text {diag}(\mathbf{w})\times \mathcal{I}^T}}{\text{d}w_l}\nonumber \\
&=\sum_{s,t}\frac{\partial f}{\partial w_{st}}\frac{\text{d}w_{st}}{\text{d}w_l}\nonumber \\
&=\frac{\partial f}{\partial w_{ij}}\frac{\text{d}w_{ij}}{\text{d}w_l}+\frac{\partial f}{\partial w_{ji}}\frac{\text{d}w_{ji}}{\text{d}w_l}+\frac{\partial f}{\partial w_{ii}}\frac{\text{d}w_{ii}}{\text{d}w_l}+\frac{\partial f}{\partial w_{jj}}\frac{\text{d}w_{jj}}{\text{d}w_l}\nonumber\\
&=\frac{\partial f}{\partial w_{ij}}+\frac{\partial f}{\partial w_{ji}}-\frac{\partial f}{\partial w_{ii}}-\frac{\partial f}{\partial w_{jj}}\nonumber\\
&=p\big ((W^{p-1})_{ji}+(W^{p-1})_{ij}-(W^{p-1})_{ii}-(W^{p-1})_{jj}\big ).\label{line4}
\end{align}
In the last equality we used equation \eqref{derivativef}. 

It is well know from graph theory that if we consider $W$ to be the adjacency matrix of a weighted graph $G$, then $(W^s)_{ij}$ is a function of the weights on the edges of the $i-j$ walks (i.e.~the walks from $i$ to $j$) of length exactly $s$ (in particular if the graph is unweighted $(W^s)_{ij}$ is the number of distinct $i-j$ $s$-walks \cite{West2000}).  Since for a given $p$ the gradient $g_l$, $l\sim (i,j)$, depends on the $\{ii, jj, ij, ji\}$ terms  of the matrix $W^{p-1}$,  $g_l$ can be calculated locally by using only the weights of links and nodes at most $\frac{p}{2}$ hops away from $i$ or $j$\footnote{
	If a link or a node is more than $p/2$ hops away both from node $i$ and node $j$, then it cannot belong to a $i-j$ walk of length $p$.
}
. Practically speaking, at each step, nodes $i$ and $j$ need to contact all the nodes up to $p/2$ hops away in order to retrieve the current values of the weights on the links of these nodes and the values of weights on the nodes themselves. For example, when $p=2$, then the minimization is the same as the minimization of the Frobenius norm of $W$ since $Tr(W^2)=\sum _{i,j}w_{ij}^2=||W||_F^2$, and the  gradient $g_l$ can be calculated as $g_l=2\times (2W_{ij}-W_{ii}-W_{jj})$ which depends only on the weights of the vertices incident to that link and the weight of the link itself. 

An advantage of our approach is that it provides a trade-off between locality and optimality. In fact, the larger the parameter $p$, the better the solution of problem \eqref{tracemin} approximates the solution of problem \eqref{minim}, but at the same time the larger is the neighborhood from which each node needs to retrieve the information. When $p=2$, then $g_l$ where $l\sim (i,j)$ only depends on the weights of subgraph induced by the two nodes $i$ and $j$. For $p=4$, the  gradient $g_l$ depends only on the weights found on the subgraph induced by the set of vertices $N_i\cup N_j$, then it is sufficient that nodes $i$ and $j$ exchange the weights of all their incident links.

% !TEX root = trace_cr.tex
\subsection{Choice of Stepsize and Projection set}
The global convergence of gradient methods (i.e.~for any initial condition) has been proved under a variety of different hypotheses on the function $h$ to minimize and on the step size sequence $\gamma^{(k)}$. 
In many cases the step size has to be adaptively selected on the basis of the value of the function or of the module of its gradient at the current estimate, but this cannot be done in a distributed way for the function $h(\mathbf{w})$. This leads us to look for convergence results where the step size sequence can be fixed ahead of time. Moreover the usual conditions, like Lipschitzianity or boundness of the gradient, are not satisfied by the function $h(.)$ over all the feasible set.
For this reason we add another constraint to our original problem~\eqref{unconsTM} by considering that the solution has to belong to a given convex and compact set $X$. Before further specifying how we choose the set $X$, we state our convergence result.

\begin{prop}\label{prop3}
Given the following problem
\begin{eqnarray}\label{unconsTM2}
\text{minimize} & &h(\mathbf{w})=Tr \left((I-\mathcal{I}\times \text {diag}(\mathbf{w})\times \mathcal{I}^T)^p\right),\nonumber\\
\text{subject to} & &\mathbf{w} \in X 
\end{eqnarray}
where $X  \subseteq \mathbb{R}^m $ is a convex and compact set, if $\sum_k \gamma^{(k)}=\infty$ and $\sum_k \left(\gamma^{(k)}\right)^2< \infty$, then the following iterative procedure converges to the minimum of $h$ in $X$:
\begin{equation}\label{e:grad_proj_vect}
\mathbf{w}^{(k+1)}= P_X\left(\mathbf{w}^{(k)} - \gamma^{(k)} \mathbf{g}^{(k)}\right),
\end{equation}
where $P_X(.)$ is the projection operator on the set $X$ and $\mathbf{g}^{(k)}$ is the gradient of $h$ evaluated in $\mathbf{w}^{(k)}$.
\end{prop}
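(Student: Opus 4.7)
My plan is to invoke a classical convergence theorem for the projected (sub)gradient method with diminishing stepsize applied to a convex function over a convex compact set (e.g.\ Proposition~8.2.6 and the related results in Bertsekas, Nedi\'c and Ozdaglar 2003, which the authors have already cited). The task then reduces to verifying that the hypotheses of such a theorem are satisfied in our setting, and the main work is bounding the gradient on $X$ so that the $\sum(\gamma^{(k)})^2$ condition actually yields summability of the error terms.

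First I would record the structural properties of the problem. By Proposition~\ref{prop1}, the function $h$ in~\eqref{unconsTM2} is smooth and convex: it is the composition of the smooth convex map $W\mapsto \Tr(W^p)$ with the affine map $\mathbf{w}\mapsto I-\mathcal{I}\,\mathrm{diag}(\mathbf{w})\,\mathcal{I}^T$, so $h$ is convex and continuously differentiable on all of $\mathbb{R}^m$. The set $X$ is convex and compact by assumption, hence the continuous function $\|\nabla h(\mathbf{w})\|$ attains a finite maximum $G<\infty$ on $X$, and $h$ attains its minimum $h^*$ on $X$ at some point $\mathbf{w}^*\in X$. Moreover, the projection $P_X$ onto the closed convex set $X$ is well-defined and nonexpansive.

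Second, I would carry out the standard Lyapunov-style estimate using the squared distance to an optimum. Writing $\mathbf{w}^*$ for an arbitrary minimizer of $h$ on $X$, nonexpansiveness of $P_X$ (noting $\mathbf{w}^*=P_X(\mathbf{w}^*)$) gives
\begin{align*}
\|\mathbf{w}^{(k+1)}-\mathbf{w}^*\|^2
&\le \|\mathbf{w}^{(k)}-\gamma^{(k)}\mathbf{g}^{(k)}-\mathbf{w}^*\|^2\\
&= \|\mathbf{w}^{(k)}-\mathbf{w}^*\|^2-2\gamma^{(k)}\langle \mathbf{g}^{(k)},\mathbf{w}^{(k)}-\mathbf{w}^*\rangle+(\gamma^{(k)})^2\|\mathbf{g}^{(k)}\|^2.
\end{align*}
Convexity of $h$ yields $\langle \mathbf{g}^{(k)},\mathbf{w}^{(k)}-\mathbf{w}^*\rangle\ge h(\mathbf{w}^{(k)})-h^*\ge 0$, and $\|\mathbf{g}^{(k)}\|^2\le G^2$ because $\mathbf{w}^{(k)}\in X$. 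Therefore
\begin{equation*}
\|\mathbf{w}^{(k+1)}-\mathbf{w}^*\|^2\le \|\mathbf{w}^{(k)}-\mathbf{w}^*\|^2-2\gamma^{(k)}\bigl(h(\mathbf{w}^{(k)})-h^*\bigr)+(\gamma^{(k)})^2 G^2.
\end{equation*}

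Third, I would conclude by telescoping. Summing the inequality from $0$ to $K$ and using $\|\mathbf{w}^{(K+1)}-\mathbf{w}^*\|^2\ge 0$ gives
\begin{equation*}
2\sum_{k=0}^{K}\gamma^{(k)}\bigl(h(\mathbf{w}^{(k)})-h^*\bigr)\le \|\mathbf{w}^{(0)}-\mathbf{w}^*\|^2+G^2\sum_{k=0}^{K}(\gamma^{(k)})^2.
\end{equation*}
Letting $K\to\infty$, the right-hand side stays bounded because $\sum(\gamma^{(k)})^2<\infty$, while $\sum\gamma^{(k)}=\infty$ forces $\liminf_k\bigl(h(\mathbf{w}^{(k)})-h^*\bigr)=0$. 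Combined with the Fej\'er-monotone-like estimate above (the sequence $\|\mathbf{w}^{(k)}-\mathbf{w}^*\|$ is asymptotically nonincreasing up to a summable perturbation), a standard argument shows that every limit point of $\{\mathbf{w}^{(k)}\}$ is a minimizer of $h$ over $X$, and that $h(\mathbf{w}^{(k)})\to h^*$.

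The main obstacle I anticipate is not the algebra above, which is routine, but rather making the gradient-boundedness step clean: the unconstrained function $h$ has unbounded gradient on $\mathbb{R}^m$ (as the authors note just before stating the proposition), so it is essential that the analysis is carried out after restricting to the compact set $X$, and that the iterates stay in $X$ thanks to the projection $P_X$. Once this is properly set up, the classical convergence theorem from Bertsekas~et~al.~applies verbatim.
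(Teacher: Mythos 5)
Your proposal is correct and follows essentially the same route as the paper: the paper's proof likewise observes that $h$ and its gradient are continuous, hence bounded, on the compact set $X$ and then invokes Proposition~8.2.6 of Bertsekas et al.\ for the diminishing-stepsize projected gradient method. You additionally unpack the standard nonexpansiveness-plus-telescoping argument behind that cited result, which is a fine (and correct) elaboration but not a different approach.
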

\begin{proof}
The function $h$ is continuous on a compact set $X$, so it has a point of minimum. Moreover also the gradient $\mathbf{g}$ is continuous and then bounded on $X$. The result then follows from Proposition~$8.2.6$ in \cite[pp. 480]{bertsekas03}. 
\end{proof}
For example, $\gamma ^{(k)}=a/(b+k)$ where $a>0$ and $b\geq 0$ satisfies the step size condition in Proposition~\ref{prop3}.

While the convergence is guaranteed for any set $X$ convex and compact, we have two other requirements. First, it should be possible to calculate the projection $P_X$ in a distributed way. Second, the set $X$ should contain the solution of the optimization problem~\eqref{unconsTM}.
%, or of the problem~\eqref{minim}.
About the first issue, we observe that if $X$ is the cartesian product of real intervals, i.e. if $X=[a_1,b_1]\times[a_2,b_2]\times \dots [a_m,b_m]$, then we have that the $l$-th component of the projection on $X$ of a vector $\mathbf{y}$ is simply the projection of the $l$-th component of the vector on the interval $[a_l,b_l]$, i.e.:
\begin{equation}
\left[P_X(\mathbf{y})\right]_l=P_{[a_l,b_l]}(y_l)=
\begin{cases}
a_l&\text{if } y_l<a_l,\\
y_l&\text{if } a_l \leq y_l\leq b_l,\\
b_l&\text{if } b_l<y_l.
\end{cases}
\end{equation} 
Then in this case Eq.~\eqref{e:grad_proj_vect} can be written component-wise as
$$w_l^{(k+1)}= P_{[a_l,b_l]}(w_l^{(k)} - \gamma^{(k)} g_l^{(k)}).$$
We have shown in the previous section that $g_l$ can be calculated in a distributed way, then the iterative procedure can be distributed.
About the second issue, we choose $X$ in such a way that we include in the feasibility set all the weight matrices with spectral radius at most $1$. 
The following lemma indicates  how to choose $X$.
\begin{lemma} \label{l:proj}
Let $W$ be a real and symmetric matrix where each row (and column) sums to $1$, then the following holds,$$\rho (W)=1 \ \Longrightarrow \ \max _{i,j}|w_{ij}|\leq 1.$$
\end{lemma}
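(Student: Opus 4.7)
The plan is to reduce the statement to the standard fact that every entry of a matrix is bounded in absolute value by its operator $2$-norm, and then exploit symmetry to identify the operator $2$-norm with the spectral radius.

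First I would invoke the spectral theorem: since $W$ is real and symmetric, it admits an orthogonal diagonalization $W = Q\Lambda Q^T$ with $Q$ orthogonal and $\Lambda = \mathrm{diag}(\lambda_1,\dots,\lambda_n)$ containing the real eigenvalues of $W$. A direct consequence is that the induced operator $2$-norm coincides with the spectral radius,
$$\|W\|_2 = \max_i |\lambda_i| = \rho(W).$$

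Second, for any pair of indices $(i,j)$, write the entry $w_{ij}$ as a bilinear form on standard basis vectors and apply Cauchy--Schwarz together with the definition of the operator norm:
$$|w_{ij}| = |e_i^T W e_j| \leq \|e_i\|_2 \, \|W e_j\|_2 \leq \|e_i\|_2 \, \|W\|_2 \, \|e_j\|_2 = \|W\|_2.$$
Chaining this with the previous identity yields $|w_{ij}| \leq \rho(W) = 1$ for all $i,j$, which is exactly the claim.

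I do not anticipate any real obstacle: once symmetry is used to turn $\rho(W)$ into an operator norm, the entry bound is immediate. I would remark, however, that the row/column sum hypothesis plays no role in this entry-wise estimate; it is included because the lemma will be used to justify confining the projected-gradient iterate to a compact box $X = [-1,1]^m$ of link weights, and in the consensus setting row-sums equal to $1$ force $\mathbf{1}$ to be an eigenvector of eigenvalue $1$, which makes $\rho(W) = 1$ the natural spectral constraint compatible with all feasible weight matrices.
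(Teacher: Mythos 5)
Your proof is correct and is essentially the same argument as the paper's: both rest on the spectral theorem for real symmetric matrices together with Cauchy--Schwarz, the only difference being that you package the entrywise bound as $|e_i^T W e_j| \leq \|W\|_2 = \rho(W)$, whereas the paper expands $w_{ij} = \sum_k \lambda_k c_k^{(i)} c_k^{(j)}$ in eigenvector coordinates and applies Cauchy--Schwarz to the rows of the orthogonal factor. Your side remark that the row/column-sum hypothesis is not actually used in the estimate is accurate; the paper's proof does not use it either.
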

\begin{proof}
%The proof is in the technical report \cite{El-Chamie:2012:8078}.
%\begin{comment}
Since $W$ is real and symmetric, then we can write $W$ as follows 
$$W=S\Lambda S^T,$$ 
where $S$ is an orthonormal matrix ($S^TS=SS^T=I$), and $\Lambda$ is a diagonal matrix having $\Lambda _{kk}=\lambda _k$ and $\lambda _k$ is the $k$-th largest eigenvalue of $W$. Let $\mathbf{r}_k$ and $\mathbf{c}_k$ be the rows and columns of $S$ respectively and $r_k^{(i)}$ be the $i$-th element of this vector. So, $$W=\sum _k\lambda _k\mathbf{c}_k\mathbf{c}_k^T,$$ and
\begin{align}
|w_{ij}|&=|\sum _k\lambda _kc_k^{(i)}c_k^{(j)}|\label{Teq1}\\
&\leq \sum _k|c_k^{(i)}||c_k^{(j)}|\label{Teq2}\\
&= \sum _k|r_i^{(k)}||r_j^{(k)}|\label{Teq3}\\
&\leq ||\mathbf{r}_i||_2||\mathbf{r}_j||_2\label{Teq4}\\
&=1.\label{Teq5}
\end{align}
 The transition from \eqref{Teq1} to \eqref{Teq2} is due to the fact $\rho (W)=1$, the transition from \eqref{Teq3} to \eqref{Teq4} is due to Cauchy--Schwarz inequality. The transition from \eqref{Teq4} to \eqref{Teq5} is due to the fact that $S$ is an orthonormal matrix.
%\end{comment}
\end{proof}

A consequence of Lemma~\ref{l:proj} is that if we choose $X=[-1,1]^m$ the weight vector of the matrix solution of problem~\eqref{minim} necessarily belongs to $X$ (the weight matrix satisfies the convergence conditions). The same is true for the solution of problem~\eqref{unconsTM} for $p$ large enough because of Proposition~\ref{problem_equivalence}. The following proposition summarizes our results.

\begin{prop}
If the graph of the network is strongly connected, then the following distributed algorithm converges to the solution of the Schatten norm minimization problem for $p$ large enough:
\begin{equation}
w_l^{(k+1)}= P_{[-1,1]}(w_l^{(k)} - \gamma^{(k)} g_l^{(k)}), \;\; \forall l =1, \dots, m,
\end{equation}
where $\sum_k \gamma^{(k)}=\infty$ and $\sum_k \left(\gamma^{(k)}\right)^2< \infty$.
\end{prop}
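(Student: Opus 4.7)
The plan is to assemble three earlier results: Proposition~\ref{problem_equivalence} (via Lemma~\ref{l:tau}), Lemma~\ref{l:proj}, and Proposition~\ref{prop3}, together with the local gradient expression derived in the previous subsection.

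First I would take $X = [-1,1]^m$, which is manifestly convex and compact. Because $X$ is a Cartesian product of intervals, the projection operator $P_X$ decomposes component-wise, so the abstract iteration $\mathbf{w}^{(k+1)} = P_X(\mathbf{w}^{(k)} - \gamma^{(k)} \mathbf{g}^{(k)})$ reduces exactly to the coordinate update in the statement, namely $w_l^{(k+1)} = P_{[-1,1)}(w_l^{(k)} - \gamma^{(k)} g_l^{(k)})$ for every $l$. The component $g_l$ was shown in Eq.~\eqref{line4} to depend only on four entries of $W^{p-1}$, and hence only on weights within $p/2$ hops of the endpoints of $l$, so each update can be performed locally, confirming that the procedure is truly distributed.

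Second, I would show that imposing the constraint $\mathbf{w} \in X$ does not shift the optimum when $p$ is large enough. By Lemma~\ref{l:tau}, the minimizer $W_{(p)}$ of the Schatten norm problem~\eqref{unconsTM} satisfies $\tau(W_{(p)}) < 1$ for all sufficiently large $p$; by definition of $\tau$ this forces $\rho(W_{(p)}) = 1$. Since $W_{(p)}$ is real, symmetric, with unit row sums, Lemma~\ref{l:proj} then yields $|w_l| \le 1$ for every link $l$, i.e.\ $\mathbf{w}_{(p)} \in X$. Consequently the minimum of $h$ over $X$ coincides with the unconstrained Schatten norm minimum and is attained at the same weight vector $\mathbf{w}_{(p)}$.

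Third, I would invoke Proposition~\ref{prop3}: $h$ is smooth and convex on $X$ (by Proposition~\ref{prop1}), $X$ is convex and compact, the gradient of $h$ is continuous and therefore bounded on $X$, and the stepsize sequence satisfies $\sum_k \gamma^{(k)} = \infty$ with $\sum_k (\gamma^{(k)})^2 < \infty$. The projected gradient iteration therefore converges to the minimum of $h$ on $X$, which by the second step is exactly the Schatten norm minimizer $\mathbf{w}_{(p)}$. The only genuinely delicate point, and the sole place where the hypothesis ``for $p$ large enough'' is consumed, is the implication chain Lemma~\ref{l:tau} $\Rightarrow \rho(W_{(p)}) = 1 \Rightarrow$ Lemma~\ref{l:proj} that guarantees the unconstrained optimum sits inside the box $X$; once that containment is in hand, the rest of the argument is a direct citation of the previously established results.
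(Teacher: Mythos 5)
Your proposal is correct and follows essentially the same route as the paper: the paper likewise combines the component-wise projection on $X=[-1,1]^m$, the containment of the unconstrained minimizer in $X$ for large $p$ (via Proposition~\ref{problem_equivalence}, itself built on Lemma~\ref{l:tau}, together with Lemma~\ref{l:proj}), and the convergence guarantee of Proposition~\ref{prop3}. The only blemish is the stray half-open interval ``$P_{[-1,1)}$'', clearly a typo.
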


\subsection{Complexity of the Algorithm}

Our distributed algorithm for Schatten Norm minimization requires to calculate at every iteration, the stepsize $\gamma ^{(k)}$, the gradient $g_l^{(k)}$ for every link, and a projection on the feasible set $X$. Its complexity is determined by the calculation of link gradient $g_l$, while the cost of the other operations is negligible. % Due to space limit, we refer the reader to the technical report \cite{El-Chamie:2012:8078} for  details on both 
 In what follows, we detail the computational costs (in terms of number of operations and memory requirements) and communication costs (in terms of volume of information to transmit) incurred by each node for the optimization with the two values $p=2$ and $p=4$.
%\begin{comment}

\subsubsection{Complexity for $p=2$}

For $p=2$, $g_l=2\times (2W_{ij}-W_{ii}-W_{jj})$, so taking into consideration that nodes are aware of their own weights ($W_{ii}$) and of the weights of the links they are incident to ($W_{ij}$), the only missing parameter in the equation is their neighbors self weight ($W_{jj}$). So at every iteration of the subgradient method, nodes must broadcast their self weight to their neighbors. We can say that the computational complexity for $p=2$ is negligible and the communication complexity is $1$ message carrying a single real value ($w_{ii}$) per link, per node and per iteration.

\subsubsection{Complexity for $p=4$}

For $p=4$, the node must collect information from a larger neighborhood. The gradient at link $l\sim (i,j)$ is given by $g_l=4\big ( (W^3)_{ij}+(W^3)_{ji}-(W^3)_{ii}-(W^3)_{jj}\big )$.
 %Notice that $g_l$ can be written as follows:
%\[g_l=4\big ( \mathbf {W}_i^TW\mathbf {W}_j+\mathbf {W}_j^TW\mathbf {W}_i-\mathbf {W}_i^TW\mathbf %{W}_i-\mathbf {W}_j^TW\mathbf {W}_j\big ).\]
From the equation of $g_l$ it seems like the node must be aware of all the weight matrix in order to calculate the 4 terms in the equation, however this is not true. As discussed in the previous section, each of the 4 terms can be calculated only locally from the weights within 2-hops from $i$ or $j$. In fact, $(W^3)_{ij}$  depends only on the weights of links covered by a walk with 3 jumps: Starting from $i$ the first jump reaches a neighbor of $i$, the second one a neighbor of $j$ and finally the third jump finishes at $j$, then we cannot move farther than 2 hops from $i$. Then this term can be calculated at node $i$ as follows: Every node $s$ in $N_i$, sends its weight vector $\mathbf{W}_s$ to $i$ ($\mathbf{W}_s$ is a vector that contains all weights selected by node $s$ to its neighbors). The same is true for the addend $(W^3)_{ji}$. The term $(W^3)_{ii}$ depends on the walks of length 3 starting and finishing in $i$, then node $i$ can calculate it once it knows $\mathbf{W}_s$ for each $s$ in $N_i$. Finally, the calculation of the term $(W^3)_{jj}$ at node $i$ requires $i$ to know more information about the links existing among the neighbors of node $j$. Instead of the transmission of this detailed information, we observe that node $j$ can calculate the value $(W^3)_{jj}$ (as node $i$ can calculate $(W^3)_{ii}$) and then can transmit directly the result of the calculation to node $i$.  Therefore, the calculation of $g_l$ by node $i$ for every link $l$ incident to $i$ can be done in three steps:
\begin{enumerate}
\item Create the subgraph $H_i$ containing the neighbors of $i$ and the neighbors of its neighbors by sending ($\mathbf{W}_i$) and receiving the weight vectors ($\mathbf{W}_s$) from every neighbor $s$.
\item Calculate $(W^3)_{ii}$ and broadcast it to the neighbors (and receive $(W^3)_{ss}$ from every neighbor $s$).
\item Calculate $g_l$.
\end{enumerate}
We evaluate now both the computational and the communication complexity.

\begin{itemize}
\item Computation Complexity: Each node $i$ must store the subgraph $H_i$ of its neighborhood. The number of nodes of $H_i$ is $n_H\leq \Delta ^2 +1$, the number of links of $H_i$ is $m_H\leq \Delta ^2$ where $\Delta$ is the maximum degree in the network. Due to sparsity of matrix $W$, the calculation of the value $(W^3)_{ii}$ requires $O(\Delta ^3)$ multiplication operation without the use of any accelerating technique in matrix multiplication which ---we believe--- could further reduce the cost. So the total cost for calculating $g_l$ is in the worst case $O(\Delta ^3)$. Notice that the complexity for solving the SDP for \eqref{minim} is of order $O(m^3)$ where $m$ is the number of links in the network. Therefore, on networks where $\Delta << m$, the gradient method would be computationally more efficient. 
\item Communication Complexity: Two packets are transmitted by each node on each link at steps $1$ and $2$. So the complexity would be two messages per link per node and per iteration. The first message carries at most $\Delta$ values (the weight vector $\mathbf{W}_i$) and the second message carries one real value ($(W^3)_{ii}$).
\end{itemize}
%\end{comment}

\section{Performance Evaluation}\label{PE}
In this section we evaluate the speed of convergence of consensus protocols when the weight matrix $W$ is selected according to our algorithm. As we have discussed in Section~\ref{s:fastest}, this speed is asymptotically determined by the second largest eigenvalue in module ($\mu(W)$), that will be one of two performance metrics considered here. The other will be defined later. The simulations are done on random graphs (Erd\"os-Renyi (ER) graphs and Random Geometric Graphs (RGG)) and on two real networks (the Enron company internal email exchange network \cite{enron} and the dolphin social network \cite{dolphin}). The random graphs are generated as following : 
\begin{itemize}
\item For the ER random graphs, we start from $n$ nodes fully connected graph, and then every link is removed from the graph by a probability $1-Pr$ and is left there with a probability $Pr$. We have tested the performance for different probabilities $Pr$.
\item For the RGG random graphs, $n$ nodes are thrown uniformly at random on a unit square area, and any two nodes within a connectivity radius $r$ are connected by a link. We have tested the performance for different connectivity radii. It is known that for a small connectivity radius, the nodes tend to form clusters. 
\end{itemize}
 The real networks are described as following:
 \begin{itemize}
 \item The Enron company has 151 employees where an edge in the graph refers to an exchange of emails between two employees (only internal emails within the company are considered where at least 3 emails are exchanged between two nodes in this graph).
 \item The dolphin social network is an undirected social network of frequent associations between 62 dolphins in a community living off Doubtful Sound, New Zealand.
 \end{itemize}

\subsection{Comparison with the optimal solution}
We first compare $\mu\left(W_{(p)}\right)$ for the solution $W_{(p)}$ of the Schatten p-norm (or Trace) minimization problem~\eqref{tracemin} with its minimum value obtained solving the symmetric FDLA problem~\eqref{minim}. To this purpose we used the \texttt{CVX} solver (see section~\ref{relatedwork}). This  allows us also to evaluate how well problem~\eqref{tracemin} approximates problem~\eqref{minim} for finite values of the parameter $p$. 
The results in Fig.~\ref{ERfdla} have been averaged over $100$ random graphs with $20$ nodes generated according to the Erdos-Renyi (ER) model, where each link is included with probability $Pr \in \{0.2,0.3,0.4,0.5\}$.   We see from the results that as we solve the trace minimization for larger $p$, the asymptotic convergence speed of our approach converges to the optimal one as proven in Proposition \ref{problem_equivalence}.
%The figure also shows that we can have good approximations of the solution \eqref{minim} by using local information only, this can be done by solving for $p=2$ or $p=4$. 

 \begin{figure}
\begin{center}
\includegraphics[scale=0.35]{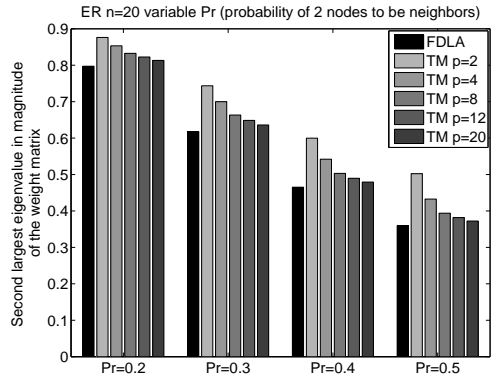}
\caption{Performance comparison between the optimal solution of the FDLA problem (labeled FDLA) and the approximated solutions obtained solving the Schatten Norm minimization for different values of $p$ (labeled TM).}
\label{ERfdla}
\end{center}
\end{figure}

%The weights obtained by solving the optimization for $p=2$ and $p=4$ are based on local information exchange only. We can compare the performance of our proposed weight selection algorithm with the previous work in section \ref{relatedwork}.

\subsection{Other distributed approaches: Asymptotic Convergence Rate}

%The worst case performance is to consider the convergence rate of the average algorithm independent of the choice of the initial distribution for estimates (asymptotic convergence). The second largest eigenvalue of the weight matrix, $\mu$, is called the asymptotic convergence rate of average consensus. 
We compare now our algorithm for $p=2$ and $p=4$ with other distributed weight selection approaches described in section~\ref{relatedwork}.

Fig.~\ref{ERtrace4} shows the results on connected Erd\"os-Renyi (ER) graphs and Random Geometric Graphs (RGG) with $100$ nodes for different values respectively of the probability $Pr$ and of the connectivity radius $r$. 
We provide 95\% confidence intervals by averaging each metric over $100$ different samples.
%For every probability $Pr$ on ER and each radius $r$ on RGG, we averaged across $100$ connected graphs with $100$ nodes each, and we give the $95\%$ confidence interval for the different curves in the plot. The performance metric on ER graphs (in the left plot) and RGG (in the right plot) is the second largest eigenvalue $\mu(W)$. The smaller $\mu$, the faster is the convergence of the algorithm, as our objective is to minimize this value. 
\begin{figure}
\begin{center}
\includegraphics[scale=0.35]{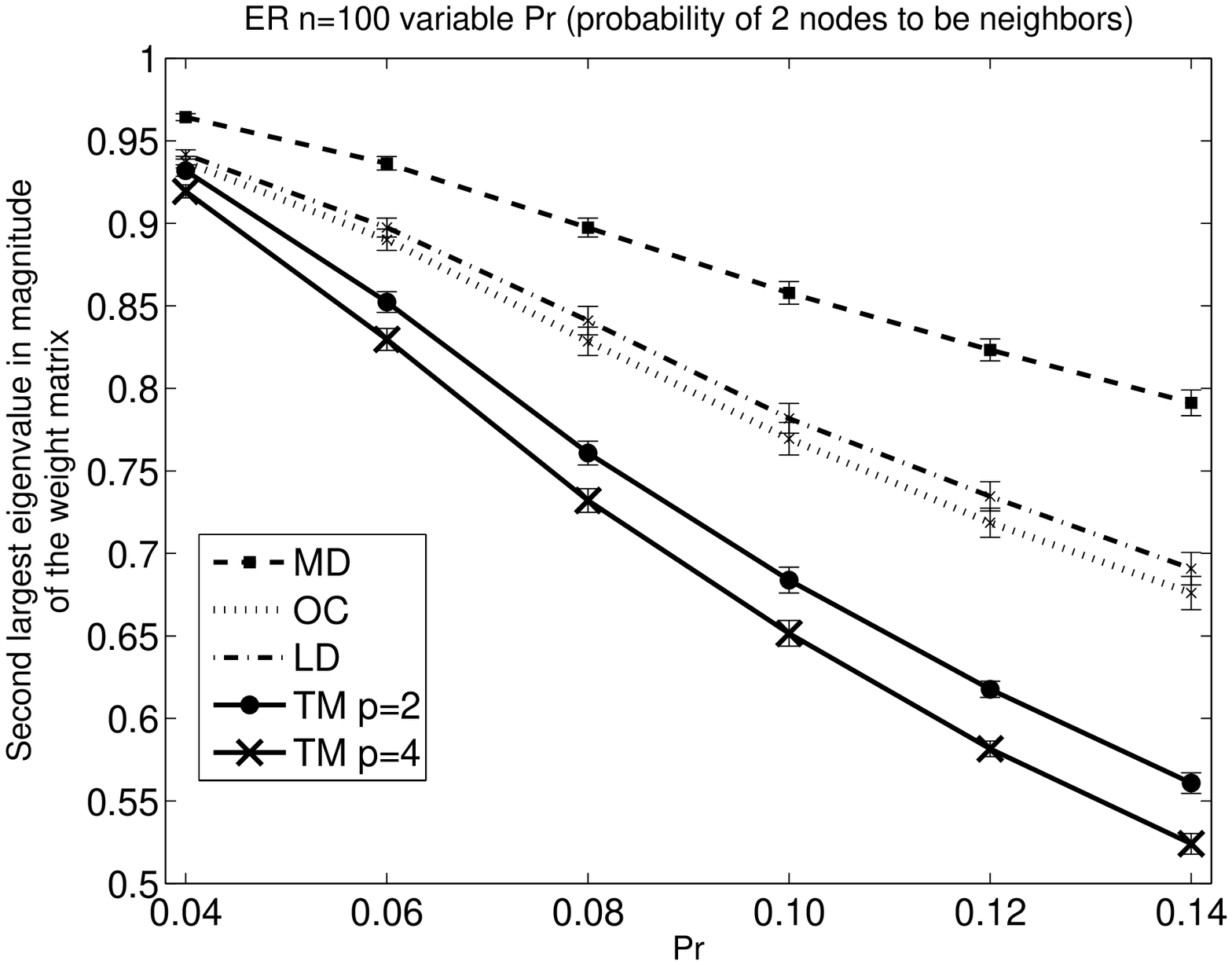}
\includegraphics[scale=0.35]{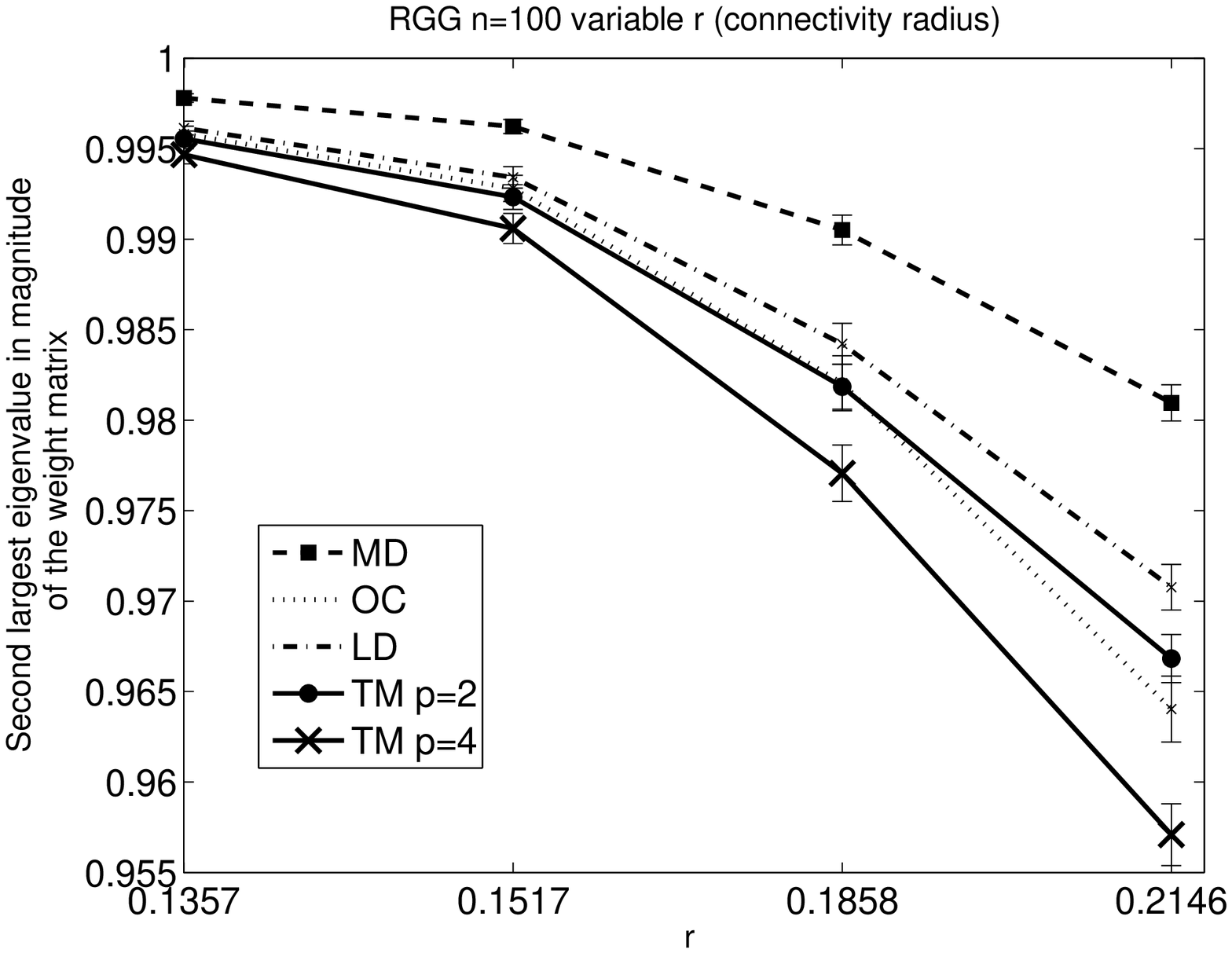}
\caption{Performance comparison between Schatten Norm minimization (TM) for $p=2$ and $p=4$ with other weight selection algorithms on ER and RGG graphs.}
\label{RGGtrace4}
\label{ERtrace4}
\end{center}
\end{figure}
We see in Fig.~\ref{RGGtrace4} that TM for $p=2$ and $p=4$ outperforms other weight selection algorithms on ER by giving lower $\mu$. Similarly  on RGG the TM algorithm reaches faster convergence than the other known algorithms even when the graph is well connected (large connectivity radius). However, the larger the degrees of nodes, the higher the complexity of our algorithm. Interestingly even performing trace minimization for the smallest value $p=2$ nodes are able to achieve faster speed of convergence than a centralized solution like the OC algorithm.

Apart from random networks, we performed simulations on two real world networks: the Enron company internal email exchange network \cite{enron} and the dolphin social network \cite{dolphin}. 
%The Enron company has 151 employees where an edge in the graph refers to an exchange of emails between two employees (only internal emails within the company are considered where at least 3 emails are exchanged between two nodes in this graph). The dolphin social network is an undirected social network of frequent associations between 62 dolphins in a community living off Doubtful Sound, New Zealand. 
The table below compares the second largest eigenvalue $\mu$ for the different weight selection algorithms on these networks:

\begin{table}[h]
\centering
    \begin{tabular}{|l|c|c|c|c|c|}
        \hline
        ~       & MD & OC & LD & TM p=2 &   TM p=4 \\ \hline
        Enron $\mu$  & 0.9880  & 0.9764  & 0.9862  & 0.9576      & 0.9246        \\  \hline
        Dolphin $\mu$ & 0.9867  & 0.9749  & 0.9796  & 0.9751      & 0.9712        \\
        \hline
    \end{tabular}
\end{table}

%fdla Dolphin : $\mu = 0.9485$
The results show that for Enron network, our totally distributed proposed algorithm TM for p=4  has the best performance ($\mu=0.9246$) among the studied weight selection algorithms followed by TM for p=2 ($\mu=0.9576$) because they have the smallest $\mu$. On the Dolphin's network, again TM for p=4 had the smallest $\mu$ ($\mu =0.9712$)  but OC had the second best performance ($\mu =0.9749$) where TM for p=2 ($\mu =0.9751$) was close to the OC performance. %Just a reminder to the reader that OC is not considered a distributed algorithm and it must be calculated in a central unit, however, the Schatten norm minimization algorithm TM is totally distributed.   

\subsection{Communication Overhead for Local Algorithms  }

Until now we evaluated only the asymptotic speed of convergence, independent from the initial values $x_i(0)$, by considering the second largest eigenvalue $\mu(W)$.
We want to study now the transient performance.
% This which characterizes the asymptotic convergence rate of average protocol (characterizes speed of convergence with respect to a worst case initial condition study). However, this asymptotic metric could mask some important speed differences on different initial distribution of estimates. 
For this reason, we consider  a random initial distribution of nodes' values and we define the convergence time to be the number of iterations needed for the error (the distance between the estimates and the actual average) to become smaller than a given threshold. More precisely, we define the normalized error $e(k)$ as 
\begin{equation}\label{consensuserror}
e(k)=\frac{{||\mathbf{x}(k)-\bar{\mathbf{x}}||}_2}{{||\mathbf{x}(0)-\bar{\mathbf{x}}||}_2},
\end{equation}
where $\bar{\mathbf{x}}=x_{ave}\mathbf{1}$, and the convergence time is the minimum number of iterations after which $e(k)<0.001$ (note that $e(k)$ is non increasing).

As the Schatten norm minimization problem itself may take a long time to converge, whereas other heuristics can be obtained instantaneously, the complexity of the optimization algorithm can affect the overall procedure. If we consider a fixed network (without changes in the topology), the weight optimization procedure is done before the start of the consensus cycles,\footnote{For example, the cycle of the daily average temperature in a network of wireless environmental monitoring sensors is one day because every day a new averaging consensus algorithm should be run.} and then the same weights are used for further average consensus cycles. Therefore, the more stable the network, the more one is ready to invest for the optimization at the beginning of consensus. The communication overhead of the local algorithms is plotted in Fig.~\ref{Complexity}. For each algorithm we consider the following criteria to define its communication overhead. First we consider the number of messages that should be exchanged in the network for the weight optimization algorithm to converge. For example, in our networking settings (RGG with $100$ nodes and connectivity radius $0.1517$) the initialization complexity of MD algorithm is 30 messages per link because the maximum degree can be obtained by running a maximum consensus algorithm that converges after a number of iterations equal to the diameter (the average diameter for the graphs was 15 hops), while by LD the nodes only need to send their degrees to their neighbors which makes its complexity for establishing weights only 2 messages per link which is the least complexity among other algorithms. The trace minimization algorithm complexity is defined by the number of iterations needed for the gradient method to converge, multiplied by the number of messages needed per iteration as mentioned in the complexity section. In our networking setting, the $TM$ for $p=2$ took on average $66.22$ messages per link to converge while the $TM$ for $p=4$ took $1388.28$ messages.\footnote{The step size $\gamma _k$ is calculated with values $a=10/p$ and $b=100$, and convergence is obtained when $||g||$ drops below the value $0.02$.} Notice that OC depends on global values (eigenvalues of the laplacian of the graph) and is not included here because it is not a local algorithm and cannot be calculated with simple iterative local methods. 

In addition to the initialization complexity, we add the communication complexity for the consensus cycles. We consider that the convergence of the consensus is reached when the consensus error of Eq.~\eqref{consensuserror} drops below $0.1\%$. The results of Fig.~\ref{Complexity} show that if the network is used for 1 or 2 cycles the best algorithm is to use $TM$ for $p=2$, followed by $LD$, followed by $MD$, and the worst  overhead is for $TM$ for $p=4$. If the network is used between 3 and 5 cycles, then $TM$ where $p=4$ becomes better that $MD$ but still worst than the other two. Further more, the $TM$ where $p=4$ becomes better than $LD$ for the 6th and 7th cycles. And finally, if the network is stable for more than 7 cycles, the $TM$ for $p=4$ becomes the best as the asymptotic study shows.  
  
\begin{figure}
\begin{center}
\includegraphics[scale=0.35]{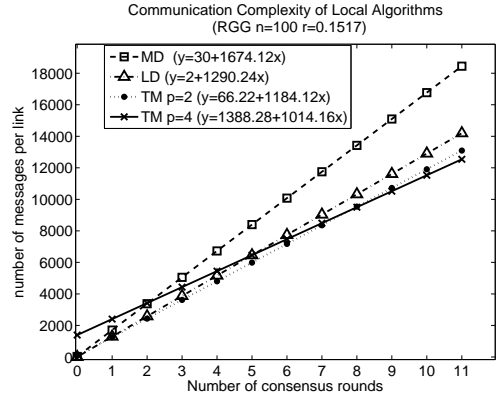}
\caption{Communication overhead of local algorithms.
} 
\label{Complexity}
\end{center}
\end{figure}

\subsection{Joint Consensus-Optimization (JCO) Procedure}\label{JCO}

In the following experiments we address also another practical concern. It may seem our approach requires to wait for the convergence of the iterative weight selection algorithm before being able to run the consensus protocol.
This may be unacceptable in some applications specially if the network is dynamic and the weights need to be calculated multiple times. In reality, at each slot the output of the distributed Schatten norm minimization is a new feasible weight matrix, that can be used by the consensus protocol, and (secondarily)  should also have faster convergence properties than the one at the previous step. It is then possible to interleave the weight optimization steps and the consensus averaging ones: at a given slot each node will improve its own weight according to \eqref{line4} and use the current weight values to perform the averaging \eqref{wsum}. We refer to this algorithm as the joint consensus--optimization (JCO) procedure. Weights can be initially set according to one of the other existing algorithms like LD or MD. The convergence time of JCO depends also on the choice of the stepsize, that is chosen to be $\gamma ^{(k)}=\frac{1}{p(1+k)}$. 
% and can be checked that it satisfies the conditions stated in the proposition to guarantee convergence of the optimization. 

\begin{figure}
\begin{center}
\includegraphics[scale=0.35]{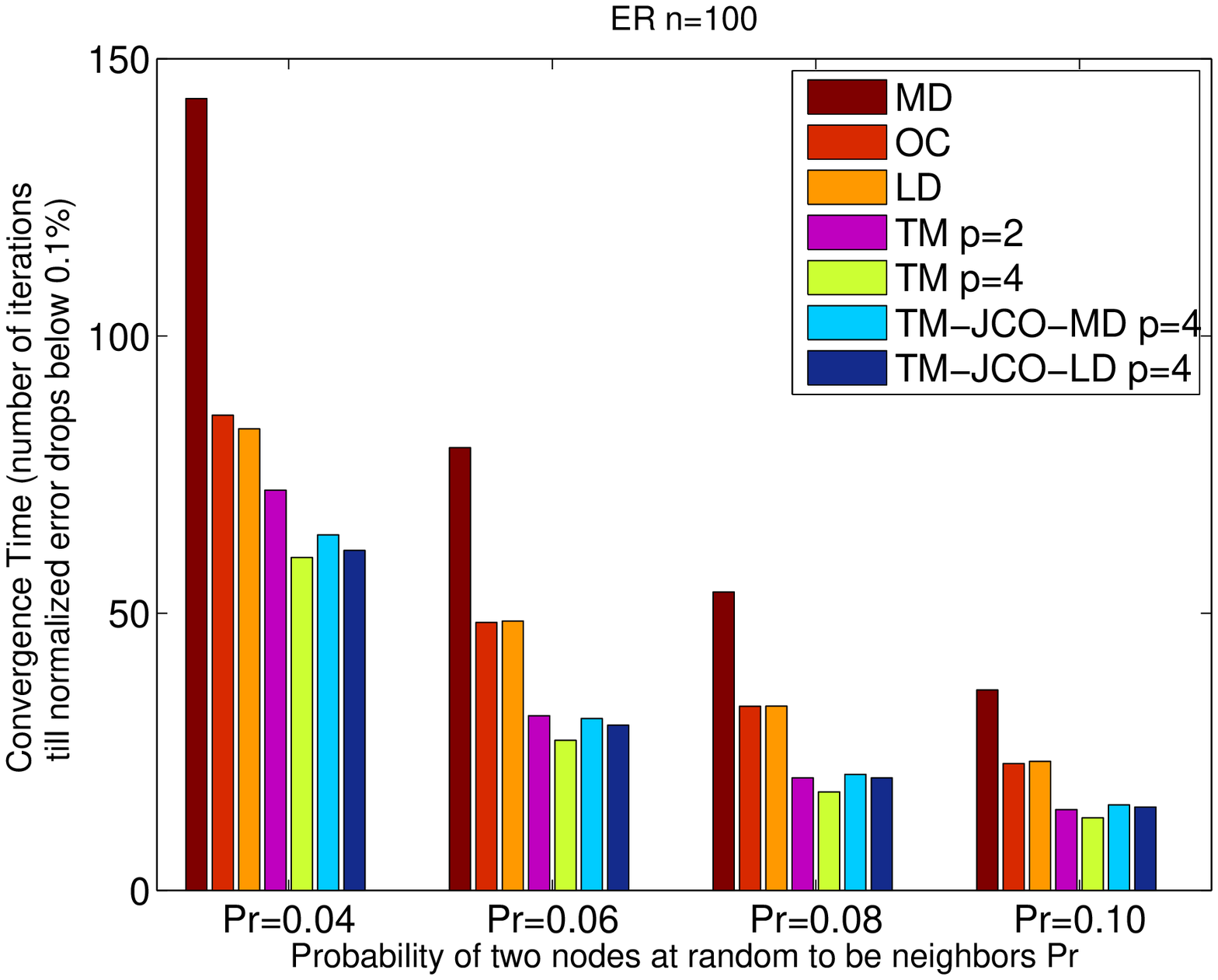}
\includegraphics[scale=0.35]{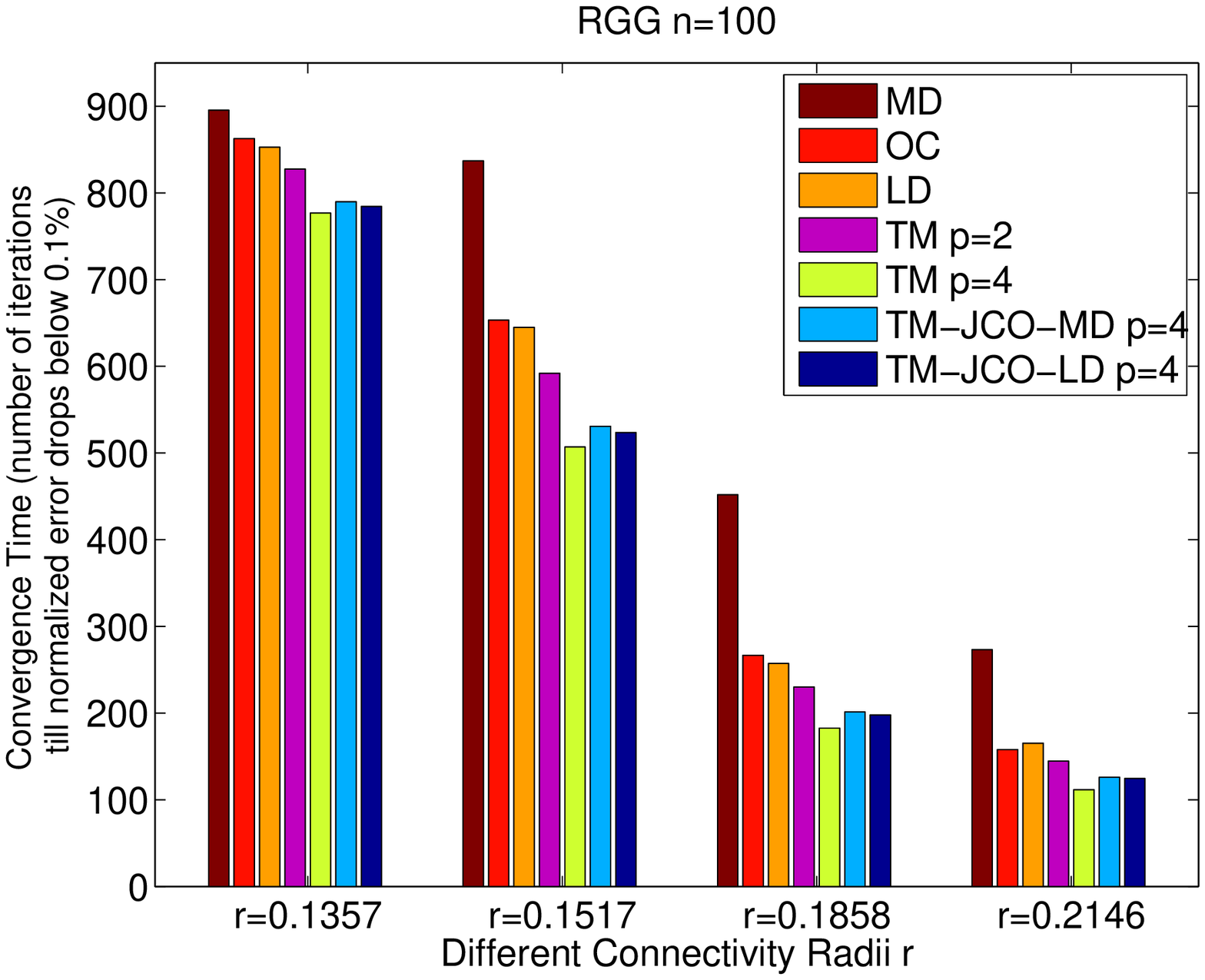}
\caption{Convergence time of different weight selection algorithms on ER and RGG graphs. TM-JCO-LD $p=4$ is the joint consensus-optimization algorithm initialized with the LD algorithm's weight matrix and the same for TM-JCO-MD $p=4$ but initialized with the MD algorithm's one.
} 
\label{RGGtrace4_sim}
\label{ERtrace4_sim}
\end{center}
\end{figure}
The simulations show that our weight selection algorithm outperforms the other algorithms also in this case. In particular, Fig.~\ref{RGGtrace4_sim}  shows the convergence time for various weight selection criteria on ER and RGG graphs. For each of the network topology selected, we averaged the data in the simulation over 100 generated graphs, and for each of these graphs we averaged the convergence time of the different algorithms over 20 random initial conditions (the initial conditions were the same for all algorithms). Notice that running at the same time the optimization with consensus gave good results in comparison to LD, MD, and even OC algorithms. We also notice, that the initial selection of the weights does not seem to have an important role for the TM-JCO approach. In fact, despite the LD weight matrix leads itself a significantly faster convergence than the MD weight matrix, the difference between TM-JCO-MD and TM-JCO-LD is minor, suggesting that the weight optimization algorithm moves fast away from the initial condition.

\section{Stability and Misbehaving Nodes}
\label{EXT}
In this section we first explain how the convergence of the consensus protocol can be guaranteed also for ``small" $p$ values (see the remark in section~\ref{TM}) and then we discuss how to deal with some forms of nodes' misbehavior.

\subsection{Guaranteeing Convergence of Trace Minimization}

The conditions \eqref{e:stoch}-\eqref{e:spectral}  guarantee that the consensus protocol converges to the correct average independently from the initial estimates.
In this section, for the sake of conciseness, we call a weight matrix that satisfies these set of conditions a \emph{convergent matrix}. A convergent matrix is any matrix that guarantees the convergence of average consensus protocols. We showed in Proposition~\ref{problem_equivalence} that for $p$ large enough, the solution $W_{(p)}$ of \eqref{tracemin} is a convergent matrix.  However, for ``small"~$p$ values, it may happen that $\mu(W_{(p)})\ge 1$  (the other conditions are intrinsically  satisfied) and then the consensus protocol does not converge for all the possible initial conditions. We observe that if all the link weights and the self weights in $W_{(p)}$ are strictly positive then  $W_{(p)}$ is a convergent matrix. In fact from Perron-Frobenius theorem for nonnegative matrices \cite{seneta2006non} it follows that a stochastic weight matrix $W$ for a strongly connected graph where $w_{ij}>0$ if and only if $(i,j)\in E$ satisfies \eqref{e:spectral} (i.e. $\mu (W)< 1$). 
Then, the matrix may not be convergent only if one of the weights is negative. 
Still in such a case nodes can calculate in a distributed way a convergent weight matrix that is ``close" to the matrix $W_{(p)}$. In this section we show how it is possible and then we discuss a practical approach to guarantee convergence while not sacrificing  the speed of convergence of $W_{(p)}$ (when it converges).

We obtain a convergent matrix from $W_{(p)}$ in two steps. First, we project  $W_{(p)}$ on a suitable set of  matrices that satisfy conditions \eqref{e:stoch} and \eqref{e:spectral}, but not necessarily \eqref{e:stoch2}, then we generate a symmetric convergent matrix from the projection.
Let $\hat{W}=W_{(p)}$ be the matrix to project, the solution of the following projection is guaranteed to satisfy~\eqref{e:stoch} and~\eqref{e:spectral}:
\begin{equation}\label{projTM}
\begin{aligned}
& \underset{W}{\text{Argmin}}
& & ||W-\hat{W}||_F^2\\
& \text{subject to}
& & W\mathbf{1}=\mathbf{1},\\
&&& W\in \mathcal{C} _G^\prime,
\end{aligned}
\end{equation}
where $\mathcal{C} _G^\prime$ is the set of non-negative matrices such that $w_{ij}\ge \delta> 0$ if $(i,j)\in E$, $w_{ij}=0$ if $(i,j) \notin E$, and $||.||_F$ is the Frobenius matrix norm. 
The constant $\delta>0$ is a parameter that is required to guarantee that the feasible set is closed.

Now, we show how it is possible to project a matrix $\hat W$ according to \eqref{projTM} in a distributed way. 
We observe that this approach is feasible because we do not require the projected matrix to be symmetric (and then satisfy \eqref{e:stoch2}). 
The key element for the distributed projection is that the Frobenius norm is separable in terms of the variables $\mathbf{W}_i$ (the $d_i\times 1$ vector of weights selected by node $i$ for its neighbors), so that problem~\eqref{projTM} is equivalent to:
  \begin{equation}\label{localprojTM}
\begin{aligned}
& \underset{\mathbf{W}_1,...,\mathbf{W}_n}{\text{Argmin}}
& & \sum _{i=1}^nr(\mathbf{W}_i)\\
& \text{subject to}
& & \mathbf{W}_i^T\mathbf{1}_{d_i}\leq 1 \ \ \forall i,\\
&&& \mathbf{W}_i \ge \delta >0 \ \ \forall i,
\end{aligned}
\end{equation}
where $\mathbf{1}_{d_i}$ is the $d_i\times 1$ vector of all ones, 
%$\mathbf{W}_i\geq \delta$ requires every element of the vector to be greater than or equal to $\delta$, 
and $r(\mathbf{W}_i)$ is defined as follows:

\begin{align}
r(\mathbf{W}_i)&=(w_{ii}-\hat{w}_{ii})^2+\sum _{j\in N_i}(w_{ij}-\hat{w}_{ij})^2\\
&=(\mathbf{W}_i-\mathbf{\hat{W}}_i)^T(\mathbf{W}_i-\mathbf{\hat{W}}_i)+\left((\mathbf{W}_i-\mathbf{\hat{W}}_i)^T\mathbf{1}_{d_i}\right)^2\\
&=(\mathbf{W}_i-\mathbf{\hat{W}}_i)^T\left(I_{d_i}+\mathbf{1}_{d_i}\mathbf{1}^T_{d_i}\right)(\mathbf{W}_i-\mathbf{\hat{W}}_i),
\end{align}
where $I_{d_i}$ is $d_i$-identity matrix.
Since the variables in \eqref{localprojTM} are separable in $\mathbf{W}_1, ...,\mathbf{W}_n$, then each node $i$ can find the global solution for its projected vector $\mathbf{W}_i^{(proj)}$ by locally minimizing the function $r(\mathbf{W}_i)$ subject to its constraints. 

Once the weight vectors $\mathbf{W}_i^{(proj)}$ are obtained, the projection of $W_{(p)}$ on the set $\mathcal{C} _G^\prime$ is uniquely identified. We denote it $W^{(proj)}$. We can then obtain a convergent weight matrix $W^{(conv)}$ by modifying $W^{(proj)}$  as follows.
For every link $l\sim (i,j)$, we set:
$$w^{(conv)}_l=\min\left\{\left(\mathbf{W}^{(proj)}_i\right)_{\alpha(j)},\left(\mathbf{W}^{(proj)}_j\right)_{\alpha(i)}\right\},$$ 
where $\alpha(j)$ (similarly $\alpha(i)$) is the index of the node $j$ (similarly $i$) in the corresponding vector. Then we calculate the convergent weight matrix: $$W^{(conv)}=I-\mathcal{I}\times \text{diag}(\mathbf{w^{(conv)}})\times \mathcal{I}^T.$$ 

While the matrix $W^{(conv)}$ is convergent, its speed of convergence may be slower than the matrix $W_{(p)}$, assuming this converges too. Then the algorithm described above should  be ideally limited to the cases where $W_{(p)}$ is known to not be convergent. Unfortunately in many network scenarios this may not be known a priori.
We discuss a possible practical approach in such cases. 
Nodes first compute $W_{(p)}$. If all the link-weights and self-weights are positive then the matrix $W_{(p)}$ can be used in the consensus protocol without any risk. If one node has calculated a non-positive weight, then it can invoke the  procedure described above to calculate $W^{(conv)}$. Nodes can then run the consensus protocol using only the matrix $W^{(conv)}$ at the price of a slower convergence or they can run the two consensus protocols in parallel averaging the initial values both with $W^{(conv)}$ and $W_{(p)}$. If the estimates obtained using $W_{(p)}$ appear to be converging to the same value of the estimates obtained using $W^{(conv)}$, then the matrix $W_{(p)}$ is likely to be convergent and the corresponding estimates should be closer to the actual average\footnote{
	Note that if $\mu(W_{(p)})>1$  the estimates calculated using $W_{(p)}$ diverge in general, then it should be easy to detect that the two consensus protocols are not converging to the same value.
}.

%tactical algorithm may work with a guaranteed convergence and possibly the good convergence pspeed of convergence 
%The matrix  $W^{(conv)}$ is guaranteed to converge and we observe that if all the elements in $W_{(p)}$ are 
% $W^{(conv)}$ might have slower convergence properties than the calculated $W_{(p)}$, but if all the elements in $W_{(p)}$ were non-negative, then $W^{(conv)}=W_{(p)}$. So nodes can run averaging on these two matrices in parallel. We know that the estimates using $W^{(conv)}$ are guaranteed to converge to the right average and if the estimates due to $W_{(p)}$ were diverging, nodes can stick to the estimates of the stable matrix, else they will consider the estimates due to $W_{(p)}$ averaging.

\subsection{Networks with Misbehaving Nodes}

 The convergence of the average consensus relies on all the nodes correctly performing the algorithm.
 %depends on the assumption that all nodes in the network are performing the same algorithm. At every iteration, each node updates its value based on the values received from its neighbors. 
 If one node transmits an incorrect value, the estimates of all the nodes can be affected.
In this section we address this particular misbehavior. In particular, let $x_i(k)$ be the estimate of node $i$ at iteration $k$, if $x_i(k)\neq w_{ii}(k-1)x_i(k-1)+\sum _{j\in N_i}w_{ij}(k-1)x_j(k-1)$, then we call $i$ a \emph{misbehaving node}. \emph{Stubborn} nodes are a special class of misbehaving nodes that keep sending the same estimate at every iteration (i.e. a node $i$ is a stubborn node when at every iteration $k$ we have $x_i(k)=x_i(k-1)\neq w_{ii}(k-1)x_i(k-1)+\sum _{j\in N_i}w_{ij}(k-1)x_j(k-1)$). The authors of \cite{Acemoglu2011} and \cite{Ben-Ameur2012} showed that networks with stubborn nodes fail to converge to consensus. In \cite{Ben-Ameur2012}, they proposed a robust average consensus algorithm that can be applied on networks having one stubborn node and converges to consensus. To the best of our knowledge, dealing with multiple stubborn nodes is still an open issue. It turns out that with a minor modification of our JCO algorithm, the nodes can detect an unbounded number of misbehaving nodes under the following assumptions:
 \begin{itemize}
 \item {\bf Assumption 1:} There is no collusion between misbehaving nodes (every node that detects a misbehaving neighbor declares it).
 \item {\bf Assumption 2:} At each iteration a misbehaving node sends the same (potentially wrong) estimate to all its neighbors. 
\end{itemize}
%The first assumption is to prevent the existence of misbehaving communities and 
The second assumption can be automatically satisfied in the case of a broadcast medium.
%imposed for example by the underlying medium as it is the case in a broadcast wireless medium, where nodes only need to broadcast the message and any neighbor in the range of transmission can receive it. In that case, all neighbors of a node receive the same (possibly modified) message.    

In the JCO procedure in section \ref{JCO}, nodes perform one weight optimization step and one average consensus step at every iteration. Consider an iteration $k$, weight optimization requires nodes to receive the weight vectors used by their neighbors (in particular, node $i$ will receive $\mathbf{W}_j^{(k-1)}$ from every neighbor $j\in N_i$), and the averaging protocol requires them to receive their neighbors estimates (in particular, node $i$ will receive $x_j(k)$ from every neighbor $j\in N_i$).  We also require that nodes send the estimates of their neighbors, e.g.~node $i$ will receive together with the vector $\mathbf{W}_j^{(k-1)}$ another vector $\mathbf{X}_j(k-1)$ from every neighbor $j\in N_i$ where $\mathbf{X}_j(k-1)$ is the vector of the estimates of the neighbors of node $j$.
With such additional information, the following simple algorithm allows nodes to detect a misbehaving neighbor:
\begin{quote}
\underline{Misbehaving Neighbor Detection Algorithm - Node $i$}\\
$\{x_j(k),\mathbf{X}_j(k-1),\mathbf{W}_j^{(k-1)} \}$: the message received from a neighbor $j$ at iteration $k$\\
$\alpha(i)$:  index of a node $i$ in the corresponding vector\\
{\bf for all} $j\in N_i$\\
\hspace*{1cm} $C=w_{jj}(k-1)x_j(k-1)+\mathbf{X}_j^T(k-1)\mathbf{W}_j^{(k-1)}$\\
\hspace*{1cm} {\bf if} $\left(x_j(k)\neq C \right)$ or $\left(x_i(k-1)\neq \left(\mathbf{X}_j(k-1)\right)_{\alpha(i)}\right)$ or $\left(w_{ij}(k-1)\neq \left(\mathbf{W}_{j}^{(k-1)}\right)_{\alpha(i)}\right)$\\
\hspace*{1.5cm} Declare $j$ as misbehaving node.\\
\hspace*{1cm} {\bf end if}\\
{\bf end for}
\end{quote}
 
The first condition ($x_j(k)\neq w_{jj}(k-1)x_j(k-1)+ \mathbf{X}_j^T(k-1)\mathbf{W}_j^{(k-1)}$) corresponds to the definition of a misbehaving node and allows neighbors to detect a node sending a wrong estimate. The second and third conditions ($x_i(k-1)\neq \left(\mathbf{X}_j(k-1)\right)_{\alpha(i)}$) or ($w_{ij}(k-1)\neq \left(\mathbf{W}_{j}^{(k-1)}\right)_{\alpha(i)}$) detect if  node $j$ is modifying the content of any element in the vectors $\mathbf{X}_j(k-1)$ and $\mathbf{W}_j^{(k-1)}$ before sending them to its neighbors. More precisely, because of Assumption 2, if a node changes any element in the previously mentioned vectors, then this message will reach all neighbors including the neighbors concerned by this modification. These neighbors will remark this modification by checking the second and the third condition, and, due to Assumption~1, they will declare the node as misbehaving.

Once a node is declared a misbehaving node, the others can ignore it by simply assigning a null weight to its links in the following iterations.

\section{Conclusion}\label{Conc}

We have proposed in this report an approximated solution for the Fastest Distributed Linear Averaging (FDLA) problem by minimizing the Schatten $p$-norm of the weight matrix. Our approximated algorithm converges to the solution of the FDLA problem as $p$ approaches $\infty$, and in comparison to it has the advantage to be suitable for a distributed implementation. Moreover, simulations on random and real networks show that the algorithm outperforms other common distributed algorithms for weight selection.  %Through simulation, we show that this method produces results that are better than other common weight selection heuristics and comparable to the optimal ones which need to be calculated in a centralized way.

\bibliographystyle{./IEEEtran}
\bibliography{./IEEEabrv,./trace_bib}

% Generated by IEEEtran.bst, version: 1.12 (2007/01/11)
\begin{thebibliography}{10}
\providecommand{\url}[1]{#1}
\csname url@samestyle\endcsname
\providecommand{\newblock}{\relax}
\providecommand{\bibinfo}[2]{#2}
\providecommand{\BIBentrySTDinterwordspacing}{\spaceskip=0pt\relax}
\providecommand{\BIBentryALTinterwordstretchfactor}{4}
\providecommand{\BIBentryALTinterwordspacing}{\spaceskip=\fontdimen2\font plus
\BIBentryALTinterwordstretchfactor\fontdimen3\font minus
  \fontdimen4\font\relax}
\providecommand{\BIBforeignlanguage}[2]{{%
\expandafter\ifx\csname l@#1\endcsname\relax
\typeout{** WARNING: IEEEtran.bst: No hyphenation pattern has been}%
\typeout{** loaded for the language `#1'. Using the pattern for}%
\typeout{** the default language instead.}%
\else
\language=\csname l@#1\endcsname
\fi
#2}}
\providecommand{\BIBdecl}{\relax}
\BIBdecl

\bibitem{Wei11a}
E.~Wei, A.~Ozdaglar, and A.~Jadbabaie, ``A distributed newton method for
  network utility maximization, {I}: algorithm,'' \emph{IEEE Transactions on
  Automatic Control}, vol.~58, no.~9, pp. 2162--2175, Sept 2013.

\bibitem{Wei11b}
------, ``A distributed newton method for network utility maximization, {II}:
  convergence,'' \emph{IEEE Transactions on Automatic Control}, vol.~58, no.~9,
  pp. 2176--2188, Sept 2013.

\bibitem{Ghadimi11}
E.~Ghadimi, M.~Johansson, and I.~Shames, ``Accelerated gradient methods for
  networked optimization,'' in \emph{American Control Conference (ACC 2011)},
  29 2011-july 1 2011, pp. 1668 --1673.

\bibitem{Kelly98}
F.~P. Kelly, A.~K. Maulloo, and D.~K.~H. Tan, ``{Rate Control for Communication
  Networks: Shadow Prices, Proportional Fairness and Stability},'' \emph{The
  Journal of the Operational Research Society}, vol.~49, no.~3, pp. 237--252,
  1998.

\bibitem{Nedic09}
A.~Nedi{\'c}, A.~Olshevsky, A.~Ozdaglar, and J.~N. Tsitsiklis, ``On distributed
  averaging algorithms and quantization effects,'' \emph{{IEEE} Trans. Autom.
  Control}, vol.~54, November 2009.

\bibitem{Nedic10}
A.~Nedi{\'c}, A.~E. Ozdaglar, and P.~A. Parrilo, ``{Constrained Consensus and
  Optimization in Multi-Agent Networks},'' \emph{IEEE Transactions on Automatic
  Control}, vol.~55, no.~4, pp. 922--938, 2010.

\bibitem{Nedic09b}
A.~Nedi{\'c}, A.~Olshevsky, A.~E. Ozdaglar, and J.~N. Tsitsiklis, ``{On
  Distributed Averaging Algorithms and Quantization Effects},'' \emph{IEEE
  Transactions on Automatic Control}, vol.~54, no.~11, pp. 2506--2517, 2009.

\bibitem{Ram10}
S.~S. Ram, A.~Nedi{\'c}, and V.~V. Veeravalli, ``{Distributed Stochastic
  Subgradient Projection Algorithms for Convex Optimization},'' \emph{J.
  Optimization Theory and Applications}, vol. 147, no.~3, pp. 516--545, 2010.

\bibitem{Nedic09a}
\BIBentryALTinterwordspacing
A.~Nedi{\'c} and A.~Ozdaglar, ``{Distributed Subgradient Methods for
  Multi-Agent Optimization},'' \emph{IEEE Transactions on Automatic Control},
  vol.~54, no.~1, pp. 48--61, Jan. 2009. [Online]. Available:
  \url{http://dx.doi.org/10.1109/TAC.2008.2009515}
\BIBentrySTDinterwordspacing

\bibitem{Lobel11}
I.~Lobel and A.~E. Ozdaglar, ``{Distributed Subgradient Methods for Convex
  Optimization Over Random Networks},'' \emph{IEEE Transactions on Automatic
  Control}, vol.~56, no.~6, pp. 1291--1306, 2011.

\bibitem{Masiero11}
R.~Masiero and G.~Neglia, ``{Distributed Sub-gradient Method for Delay Tolerant
  Networks},'' in \emph{30th IEEE International Conference on Computer
  Communications (INFOCOM 2011)}, Apr. 2011.

\bibitem{Alouf10}
S.~Alouf, G.~Neglia, I.~Carreras, D.~Miorandi, and A.~Fialho, ``{Fitting
  Genetic Algorithms to Distributed On-line Evolution of Network Protocols},''
  \emph{Elsevier Computer Networks}, vol.~54, no.~18, pp. 3402--3420, Dec.
  2010.

\bibitem{Kauffmann07}
B.~Kauffmann, F.~Baccelli, A.~Chaintreau, V.~Mhatre, K.~Papagiannaki, and
  C.~Diot, ``{Measurement-Based Self Organization of Interfering 802.11
  Wireless Access Networks},'' in \emph{26th IEEE International Conference on
  Computer Communications (INFOCOM 2007)}, 2007, pp. 1451--1459.

\bibitem{Rabideau:1996}
D.~Rabideau, ``Fast, rank adaptive subspace tracking and applications,''
  \emph{IEEE Transactions on Signal Processing,}, vol.~44, no.~9, pp. 2229
  --2244, September 1996.

\bibitem{Ren:2007}
W.~Ren and R.~W. Beard, \emph{{Distributed Consensus in Multi-vehicle
  Cooperative Control: Theory and Applications}}, 1st~ed.\hskip 1em plus 0.5em
  minus 0.4em\relax Springer Publishing Company, Incorporated, 2007.

\bibitem{Boyd06}
S.~Boyd, A.~Ghosh, B.~Prabhakar, and D.~Shah, ``{Randomized Gossip
  Algorithms},'' \emph{IEEE Trans. Inf. Theory}, vol.~52, pp. 2508--2530, June
  2006.

\bibitem{Saber07}
R.~Olfati-saber, J.~A. Fax, and R.~M. Murray, ``Consensus and cooperation in
  networked multi-agent systems,'' in \emph{Proc. of the IEEE}, Jan. 2007.

\bibitem{Ren:2005}
W.~Ren, R.~Beard, and E.~Atkins, ``A survey of consensus problems in
  multi-agent coordination,'' in \emph{Proceedings of the American Control
  Conference (ACC 2005)}, June 2005, pp. 1859 -- 1864 vol. 3.

\bibitem{Xiao04}
L.~Xiao and S.~Boyd, ``Fast linear iterations for distributed averaging,''
  \emph{Systems and Control Letters}, vol.~53, pp. 65--78, 2004.

\bibitem{Meyer:2000}
C.~D. Meyer, Ed., \emph{Matrix analysis and applied linear algebra}.\hskip 1em
  plus 0.5em minus 0.4em\relax Philadelphia, PA, USA: Society for Industrial
  and Applied Mathematics, 2000.

\bibitem{Fan:1995}
M.~K. Fan and B.~Nekooie, ``On minimizing the largest eigenvalue of a symmetric
  matrix,'' \emph{Linear Algebra and its Applications}, vol. 214, pp. 225--246,
  1995.

\bibitem{cvx:2011}
M.~Grant and S.~Boyd, ``{CVX: Matlab Software for Disciplined Convex
  Programming, version 1.21},'' April 2011.

\bibitem{Kim:2009}
Y.~Kim, D.-W. Gu, and I.~Postlethwaite, ``Spectral radius minimization for
  optimal average consensus and output feedback stabilization,''
  \emph{Automatica}, vol.~45, no.~6, pp. 1379 -- 1386, 2009.

\bibitem{Xiao05distributedaverage}
L.~Xiao, S.~Boyd, and S.~jean Kim, ``{Distributed average consensus with
  least-mean-square deviation},'' \emph{Journal of Parallel and Distributed
  Computing}, vol.~67, pp. 33--46, 2005.

\bibitem{BoydBook:2004}
S.~Boyd and L.~Vandenberghe, \emph{{Convex Optimization}}.\hskip 1em plus 0.5em
  minus 0.4em\relax Cambridge University Press, March 2004.

\bibitem{Bernstein:2009}
D.~Bernstein, \emph{Matrix mathematics: theory, facts, and formulas}.\hskip 1em
  plus 0.5em minus 0.4em\relax Princeton University Press, 2005.

\bibitem{Nedic:2009}
A.~Nedi{\'c} and A.~Ozdaglar, ``Distributed subgradient methods for multi-agent
  optimization,'' \emph{IEEE Transactions on Automatic Control}, vol.~54,
  no.~1, pp. 48--61, Jan 2009.

\bibitem{Johanson2008}
B.~Johansson, T.~Keviczky, M.~Johansson, and K.~Johansson, ``Subgradient
  methods and consensus algorithms for solving convex optimization problems,''
  in \emph{47th IEEE Conference on Decision and Control CDC 2008}, Dec 2008,
  pp. 4185--4190.

\bibitem{Polyak87}
B.~Polyak, \emph{Introduction to Optimization}.\hskip 1em plus 0.5em minus
  0.4em\relax New York: Optimization Software, 1987.

\bibitem{bertsekas03}
D.~P. Bertsekas, A.~Nedi\'{c}, and A.~E. Ozdaglar, \emph{{Convex Analysis and
  Optimization}}.\hskip 1em plus 0.5em minus 0.4em\relax Athena Scientific,
  2003.

\bibitem{West2000}
D.~B. West, \emph{{Introduction to Graph Theory (2nd Edition)}}.\hskip 1em plus
  0.5em minus 0.4em\relax {Prentice Hall}, Aug. 2000.

\bibitem{enron}
J.~Shetty and J.~Adibi, ``{The Enron email dataset database schema and brief
  statistical report},'' University of Southern California, Information
  Sciences Institute Technical Report, 2004.

\bibitem{dolphin}
D.~Lusseau, K.~Schneider, O.~J. Boisseau, P.~Haase, E.~Slooten, and S.~M.
  Dawson, ``{The bottlenose dolphin community of Doubtful Sound features a
  large proportion of long-lasting associations},'' \emph{Behavioral Ecology
  and Sociobiology}, vol.~54, no.~4, pp. 396--405, 2003.

\bibitem{seneta2006non}
E.~Seneta, \emph{Non-negative Matrices and Markov Chains}, ser. Springer Series
  in Statistics.\hskip 1em plus 0.5em minus 0.4em\relax Springer, 2006.

\bibitem{Acemoglu2011}
D.~Acemoglu, G.~Como, F.~Fagnani, and A.~Ozdaglar, ``Opinion fluctuations and
  persistent disagreement in social networks,'' in \emph{50th IEEE Conference
  on Decision and Control and European Control Conference (CDC-ECC 2011)}, Dec.
  2011, pp. 2347 --2352.

\bibitem{Ben-Ameur2012}
W.~Ben-Ameur, P.~Bianchi, and J.~Jakubowicz, ``Robust average consensus using
  total variation gossip algorithm,'' in \emph{6th International Conference on
  Performance Evaluation Methodologies and Tools (VALUETOOLS 2012)}.\hskip 1em
  plus 0.5em minus 0.4em\relax IEEE, Nov. 2012.

\end{thebibliography}
%\bibliography{./trace_bib}

% that's all folks

\end{document}